\newdimen\plusheight
\def\+{\;\lower\plusheight\hbox{$+$}\;}
\newdimen\minusheight
\def\-{\;\lower\minusheight\hbox{$-$}\;}
\newdimen\cdotsheight
\def\cds{\lower\cdotsheight\hbox{$\cdots$}}
\renewcommand{\(}{\left\(}
\renewcommand{\)}{\right\)}
\renewcommand{\[}{\left[}
\theoremstyle{plain}
\newtheorem{thm}{Theorem}[section]
\newtheorem{cor}[thm]{Corollary}
\newtheorem{lem}[thm]{Lemma}
\theoremstyle{definition}
\newtheorem{defin}[thm]{Definition}
\begin{document}
\title[$p$-adic Gamma function and the trace of Frobenius of elliptic curves]
{$p$-adic Gamma function and the trace of Frobenius of elliptic curves}
\author{Rupam Barman}
\address{Department of Mathematics, Indian Institute of Technology, Hauz Khas, New Delhi-110016, INDIA}
\email{rupambarman@gmail.com}
\author{Neelam Saikia}
\address{Department of Mathematical Sciences, Tezpur University, Napaam-784028, Sonitpur, Assam, INDIA}
\email{nlmsaikia1@gmail.com} \vspace*{0.7in}
\begin{center}

{\bf $p$-ADIC GAMMA FUNCTION AND THE TRACE OF FROBENIUS OF ELLIPTIC CURVES}\\[5mm]

Rupam Barman and Neelam Saikia\\[.2cm]
\end{center}
\vspace{.51cm} \noindent\textbf{Abstract}: In \cite{mccarthy2}, McCarthy defined a function $_{n}G_{n}[\cdots]$ using Teichm\"{u}ller
character of finite fields and quotients of $p$-adic gamma function, and expressed the trace of Frobenius of elliptic curves in terms of
special values of $_{2}G_{2}[\cdots]$. We establish two different expressions for the traces of Frobenius of elliptic curves in terms of the function
$_{2}G_{2}[\cdots]$. As a result, we obtain two relations between special values of the function $_{2}G_{2}[\cdots]$ with different parameters.

\vspace{.2cm}
\noindent{\footnotesize \textbf{Key Words}: Trace of Frobenius; elliptic curves; characters of finite fields; Gauss sums; Teichm\"{u}ller character;
$p$-adic Gamma function.}

\vspace{.2cm}
\noindent{\footnotesize 2010 Mathematics Classification Numbers: Primary: 11G20, 33E50; Secondary: 33C99, 11S80,
11T24.}
\section{Introduction and statement of results}
Let $q=p^r$ be a power of an odd prime, and let $\mathbb{F}_q$ be the finite field of $q$ elements. 
Let $\mathbb{Z}_p$ denote the ring of $p$-adic integers. Let $\Gamma_p(.)$ denote the Morita's $p$-adic gamma function, and let $\omega$ denote the 
Teichm\"{u}ller character of $\mathbb{F}_q$. We denote by $\overline{\omega}$ the inverse of $\omega$. 
For $x \in \mathbb{Q}$ we let $\lfloor x\rfloor$ denote the greatest integer less than 
or equal to $x$ and $\langle x\rangle$ denote the fractional part of $x$, i.e. $x-\lfloor x\rfloor$. Also, we denote by $\mathbb{Z}^{+}$ and $\mathbb{Z}_{\geq 0}$
the set of positive integers and non negative integers, respectively. In \cite{mccarthy2}, McCarthy defined a function 
$_{n}G_{n}[\cdots]$ as given below.
\begin{defin}\cite[Defn. 5.1]{mccarthy2} \label{defin1}
Let $q=p^r$, for $p$ an odd prime and $r \in \mathbb{Z}^+$, and let $t \in \mathbb{F}_q$. 
For $n \in \mathbb{Z}^+$ and $1\leq i\leq n$, let $a_i$, $b_i$ $\in \mathbb{Q}\cap \mathbb{Z}_p$. Then the function $_{n}G_{n}[\cdots]$ is defined by 
\begin{align}
&_nG_n\left[\begin{array}{cccc}
             a_1, & a_2, & \cdots, & a_n \\
             b_1, & b_2, & \cdots, & b_n
           \end{array}|t
 \right]_q:=\frac{-1}{q-1}\sum_{j=0}^{q-2}(-1)^{jn}~~\overline{\omega}^j(t)\notag\\
&\times \prod_{i=1}^n\prod_{k=0}^{r-1}(-p)^{-\lfloor \langle a_ip^k \rangle-\frac{jp^k}{q-1} \rfloor -\lfloor\langle -b_ip^k \rangle +\frac{jp^k}{q-1}\rfloor}
 \frac{\Gamma_p(\langle (a_i-\frac{j}{q-1})p^k\rangle)}{\Gamma_p(\langle a_ip^k \rangle)}
 \frac{\Gamma_p(\langle (-b_i+\frac{j}{q-1})p^k \rangle)}{\Gamma_p(\langle -b_ip^k \rangle)}.\notag
\end{align}
\end{defin}
This function has many interesting properties. For further details, see \cite{mccarthy2}.
In \cite{greene}, Greene introduced the notion of hypergeometric
functions over finite fields. Since then, many interesting connections between hypergeometric functions over finite field and algebraic curves have been found.
But these results are restricted to primes satisfying certain congruence conditions. For example, see \cite{BK1, BK2, Fuselier, lennon, lennon2}.  
Let $E/\mathbb{F}_q$ be an elliptic curve given in the Weierstrass form.
Then the trace of Frobenius $a_q(E)$ of $E$ is given by
\begin{align}\label{eq18}
  a_q(E):=q+1-\#E(\mathbb{F}_q),
\end{align}
where $\#E(\mathbb{F}_q)$ denotes the number of $\mathbb{F}_q$-points on $E$ including the point at infinity. Let $j(E)$ denote the $j$-invariant of the elliptic 
curve $E$. Let $\phi$ be the quadratic character of $\mathbb{F}_q^{\times}$ extended to all of $\mathbb{F}_q$ by setting $\phi(0):=0$. 
Using the function $_{2}G_{2}[\cdots]$, McCarthy expressed the trace of 
Frobenius of elliptic curves defined over $\mathbb{F}_p$ without any congruence condition on the prime. The statement of his result is given below.
\begin{thm}\cite[Thm. 1.2]{mccarthy2}\label{mc}
Let $p>3$ be a prime. Consider an elliptic curve $E_s/\mathbb{F}_p$ of the form $E_s: y^2=x^3+ax+b$ with $j(E_s)\neq 0, 1728$. Then
\begin{align}
 a_p(E_s)=\phi(b)\cdot p\cdot {_2}G_2\left[ \begin{array}{cc}
              \frac{1}{4}, & \frac{3}{4} \\
              \frac{1}{3}, & \frac{2}{3}
            \end{array}|-\frac{27b^2}{4a^3}
 \right]_p.
\end{align}
\end{thm}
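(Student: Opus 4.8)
The plan is to start from the point-count formula, reduce the theorem to a character-sum evaluation, and then pass to the $p$-adic gamma function through the Gross--Koblitz formula. Writing $E_s:y^2=x^3+ax+b$, summing $1+\phi(x^3+ax+b)$ over $x\in\mathbb{F}_p$ and adjoining the point at infinity gives $\#E_s(\mathbb{F}_p)=p+1+\sum_{x}\phi(x^3+ax+b)$, so by \eqref{eq18}
\begin{equation*}
a_p(E_s)=-\sum_{x\in\mathbb{F}_p}\phi(x^3+ax+b).
\end{equation*}
Since $E_s$ is defined over $\mathbb{F}_p$ we have $r=1$, $q=p$, so in Definition \ref{defin1} the inner product over $k$ collapses to the single term $k=0$ and $(-1)^{jn}=1$; the right-hand side of the theorem is therefore an explicit single sum over $j$ of quotients of $\Gamma_p$-values times $\overline{\omega}^{\,j}(-\tfrac{27b^2}{4a^3})$. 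Note also that $j(E_s)\neq0,1728$ forces $ab\neq0$, so $\phi(b)\neq0$ and the argument is a well-defined nonzero element. The goal is to reproduce that sum exactly.

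The first substantive step is to linearize the character sum. I would write the quadratic character as a power of the Teichm\"uller character, $\phi=\omega^{(p-1)/2}$, and expand $\sum_x\phi(x^3+ax+b)$ into Gauss sums, either via the additive Fourier expansion $\phi(u)=g(\phi)^{-1}\sum_{t}\phi(t)\theta(ut)$ followed by evaluation of the resulting cubic exponential sums, or, more in keeping with the hypergeometric framework, by expanding through Jacobi sums so that the sum becomes a finite-field ${}_2F_1$. In either route the cubic term $x^3$ forces, via the Hasse--Davenport product relation applied with a character of order $3$, the parameters $\tfrac13,\tfrac23$, while the square $y^2$ together with the duplication relation forces the parameters $\tfrac14,\tfrac34$. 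Collecting the $a$- and $b$-dependent arguments of the Gauss sums then produces the single variable $-\tfrac{27b^2}{4a^3}$ together with the twist $\phi(b)$, and multiplicative orthogonality compresses everything into one sum over $j=0,\dots,p-2$ of products of Gauss sums $g(\overline{\omega}^{\,j})$ with appropriate rational shifts.

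Next I would invoke the Gross--Koblitz formula, which expresses each $g(\overline{\omega}^{\,j})$ as $-\pi^{\,s}\,\Gamma_p\!\big(\langle \tfrac{j}{p-1}\rangle\big)$ for an explicit integer $s$, where $\pi$ is a fixed root of $X^{p-1}+p=0$, so that $\pi^{p-1}=-p$. After substituting this for every Gauss sum and cancelling the normalizing factor $g(\phi)$, the surviving quotients of $\Gamma_p$-values assemble precisely into the summand of ${}_2G_2$ with the stated parameters, the leftover power of $p$ yields the overall factor $p$, and the prefactor $\phi(b)=\omega^{(p-1)/2}(b)$ emerges from the $b$-dependent Teichm\"uller twist. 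The crucial accounting point is that the fractional $\pi$-powers cancel across the several Gauss sums, since the corresponding arguments sum to an integer, leaving only integer powers of $\pi^{p-1}=-p$.

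The hard part will be the bookkeeping in this last step: one must show that those integer powers of $-p$ reorganize \emph{exactly} into the floor-function exponents $-\lfloor\langle a_i\rangle-\tfrac{j}{p-1}\rfloor-\lfloor\langle -b_i\rangle+\tfrac{j}{p-1}\rfloor$ of Definition \ref{defin1}, and simultaneously that each shifted argument $\langle(a_i-\tfrac{j}{p-1})\rangle$ matches the corresponding Teichm\"uller index. The second, conceptual obstacle is to verify that the whole derivation is independent of the actual existence of characters of orders $3$ and $4$, so that the identity holds for every prime $p>3$ rather than only for $p\equiv1\pmod{12}$; this is exactly the payoff of replacing Jacobi sums by Gross--Koblitz, since $\omega$ is available for all $p$, and it is what forces the appearance of $\Gamma_p$ in place of classical Gauss or Jacobi sums. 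A final check that the $x$ with $x^3+ax+b=0$ contribute nothing and that $ab\neq0$ keeps all terms well-defined then completes the argument.
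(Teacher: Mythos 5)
First, a point of orientation: the paper does not actually prove Theorem \ref{mc} --- it is quoted from McCarthy \cite{mccarthy2} --- and the closest analogues in this paper are the proofs of Theorems \ref{thm3} and \ref{thm5}, which follow exactly the template you describe (point count, Gauss sums, Davenport--Hasse, Gross--Koblitz, floor-function bookkeeping). Your outline has the right shape, but it contains a genuine gap at the step where the parameters $\frac{1}{3},\frac{2}{3}$ and $\frac{1}{4},\frac{3}{4}$ are supposed to appear. You propose to produce them ``via the Hasse--Davenport product relation applied with a character of order $3$'' (and similarly order $4$); such characters exist only when $p\equiv 1\pmod{3}$ (resp.\ $p\equiv 1\pmod{4}$), so this step is unavailable for precisely the primes the theorem is designed to cover. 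You do flag this as a ``conceptual obstacle'' and assert that Gross--Koblitz resolves it, but that is not enough: the Gross--Koblitz formula converts a single Gauss sum $G(\overline{\omega}^a)$ into a single value $\Gamma_p(\langle \frac{a}{q-1}\rangle)$ times a power of $\pi$; it does not, by itself, split $\Gamma_p(\langle \frac{3l}{q-1}\rangle)$ into a product of $\Gamma_p$-values at arguments shifted by thirds. The missing tool is the $p$-adic multiplication formula \eqref{eq2}, packaged in this paper as Lemma \ref{lemma4}: it is that identity, valid for any $m$ with $p\nmid m$ and with \emph{no} congruence condition on $q$, which manufactures the parameters $\frac{1}{3},\frac{2}{3},\frac{1}{4},\frac{3}{4}$ out of the Gauss sums $G_{3l}$, $G_{4l}$ after they have been converted to gamma values, and which simultaneously supplies the factors $\omega^l(27)$, $\omega^l(4)$ that turn the raw argument into $-\frac{27b^2}{4a^3}$. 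Without it your derivation establishes the identity only for $p\equiv 1\pmod{12}$, which is exactly the restriction the $_2G_2$ formulation exists to remove.

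The remaining bookkeeping you defer is also not routine. Matching the integer powers of $-p$ to the exponents $-\lfloor \langle a_i\rangle-\frac{j}{q-1}\rfloor-\lfloor\langle -b_i\rangle+\frac{j}{q-1}\rfloor$ of Definition \ref{defin1} requires case-by-case floor identities of the type proved in Lemmas \ref{lemma5} and \ref{lemma6}, and one also needs the reflection formula \eqref{eq26} to strip off the $\Gamma_p(\langle(1-\frac{l}{q-1})p^i\rangle)\Gamma_p(\langle\frac{l}{q-1}p^i\rangle)$ factors and to pass from the parameters $\frac{1}{3},\frac{2}{3}$ to the $-\frac{1}{3},-\frac{2}{3}$ form that Definition \ref{defin1} actually demands in the lower row. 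These are finite verifications, but they are where most of the work lives; as written, your proposal identifies the destination without supplying the two load-bearing ingredients (the multiplication formula for $\Gamma_p$ and the floor-function lemmas) needed to get there.
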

In \cite{BK2}, the first author \& Kalita gave two formulas for the trace of Frobenius of the elliptic curve $E_{a,b}: y^2=x^3+ax+b$ 
defined over $\mathbb{F}_q$ under the conditions $q\equiv 1$ $($mod $6)$ and  $q\equiv 1$ $($mod $4)$, respectively. In this paper, we prove the following two 
expressions for the trace of Frobenius of the elliptic curve $E_{a,b}/\mathbb{F}_q$ in terms of special values of the function $_{2}G_{2}[\cdots]$
without any congruence conditions on $q$.
\begin{thm}\label{thm1}
Let $q=p^r$, $p>3$ be a prime. Consider an elliptic curve $E_{a, b}/\mathbb{F}_q$ of the form $E_{a, b}: y^2=x^3+ax+b$ with $j(E_{a, b})\neq 0$. 
If $(-a/3)$ is a quadratic residue in $\mathbb{F}_q$, then
\begin{align}
a_q(E_{a,b})=\phi(k^3+ak+b)\cdot q \cdot {_2}G_2\left[ \begin{array}{cc}
              \frac{1}{2}, & \frac{1}{2} \\
              \frac{1}{3}, & \frac{2}{3}
            \end{array}|-\frac{k^3+ak+b}{4k^3}
 \right]_q,\notag
\end{align}
where $3k^2+a=0.$
\end{thm}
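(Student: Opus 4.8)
The plan is to start from the point-count formula \eqref{eq18}. Writing $f(x)=x^3+ax+b$ and using that $1+\phi(w)$ counts the square roots of $w$ (including $w=0$), one gets $\#E_{a,b}(\mathbb{F}_q)=q+1+\sum_{x\in\mathbb{F}_q}\phi(f(x))$, hence $a_q(E_{a,b})=-\sum_{x\in\mathbb{F}_q}\phi(f(x))$. The hypothesis that $-a/3$ is a quadratic residue guarantees that a root $k\in\mathbb{F}_q$ of $3k^2+a=0$ exists; since $j(E_{a,b})\neq0$ forces $a\neq0$ we have $k\neq0$, and non-singularity forces $c:=k^3+ak+b=f(k)\neq0$. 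Because $k$ is a critical point of $f$, the shift $x=u+k$ kills the linear term and yields $f(u+k)=u^3+3ku^2+c=u^2(u+3k)+c$. Pulling $c$ out of the quadratic character then gives
\begin{align*}
a_q(E_{a,b})=-\phi(c)\sum_{u\in\mathbb{F}_q}\phi\Big(1+\tfrac{1}{c}\,u^2(u+3k)\Big),
\end{align*}
which already displays the prefactor $\phi(c)=\phi(k^3+ak+b)$ of the claimed formula.

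Next I would evaluate the character sum $S:=\sum_{u}\phi\big(1+c^{-1}u^2(u+3k)\big)$ entirely in terms of the Gauss sums $g(\overline{\omega}^j)=\sum_x\overline{\omega}^j(x)\theta(x)$ attached to powers of the Teichm\"uller character (for a fixed nontrivial additive character $\theta$), since these exist for \emph{every} $q$, whereas a genuine cubic character need not. Expanding the quadratic character through the Gauss-sum identity $\phi(y)=g(\phi)^{-1}\sum_{s}\phi(s)\theta(sy)$, interchanging summation, and carrying out the sum over $u$ governed by the factor $u^2(u+3k)$ via character orthogonality should collapse $S$ to a single sum over $0\le j\le q-2$ of products of such Gauss sums, weighted by $\overline{\omega}^j(z)$ with $z=-c/(4k^3)$. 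The factor $q$ appearing in the statement is produced by a norm relation of the form $g(\chi)g(\overline{\chi})=\chi(-1)q$, and the argument $z$ emerges from the normalization of the cubic factor, as in the proof of Theorem \ref{mc}.

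Finally I would invoke the Gross--Koblitz formula to replace each $g(\overline{\omega}^j)$ by the corresponding product $\prod_{k=0}^{r-1}\Gamma_p(\langle\,\cdot\,\rangle)$ times a power of $\pi$, where $\pi^{p-1}=-p$; the $\pi$-powers then reassemble into the powers of $-p$ that carry the floor exponents $-\lfloor\langle a_ip^k\rangle-\tfrac{jp^k}{q-1}\rfloor-\lfloor\langle -b_ip^k\rangle+\tfrac{jp^k}{q-1}\rfloor$ occurring in Definition \ref{defin1}. Matching term-by-term with that definition—identifying the numerator parameters as $\tfrac12,\tfrac12$ and the denominator parameters as $\tfrac13,\tfrac23$ by means of the $p$-adic Gamma multiplication and reflection identities—should yield exactly $\phi(c)\,q\,{_2}G_2[\,\tfrac12,\tfrac12;\tfrac13,\tfrac23\mid z\,]$.

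The main obstacle is precisely this last matching: manufacturing the parameters $\tfrac13,\tfrac23$, which correspond to cubic characters that may not exist in $\widehat{\mathbb{F}_q^{\times}}$, purely at the level of the $p$-adic Gamma function, and simultaneously reconciling the sign $(-1)^{jn}$, the $\pi$-powers, and the floor-function exponents of $-p$ with Definition \ref{defin1}. Performing this bookkeeping uniformly in $j$ and with no congruence hypothesis on $q$ is where the real work lies; the point-counting and the shift $x=u+k$ are routine by comparison.
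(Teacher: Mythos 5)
Your reduction is exactly the paper's: the hypothesis that $-a/3$ is a quadratic residue produces $k\in\mathbb{F}_q^{\times}$ with $3k^2+a=0$, and the shift $x\mapsto x+k$ turns $E_{a,b}$ into $y^2=x^3+3kx^2+(k^3+ak+b)$. The paper then simply invokes its Theorem \ref{thm3}, which evaluates $a_q$ for any curve $y^2=x^3+cx^2+d$ as $q\,\phi(d)\,{_2}G_2\left[\begin{smallmatrix}1/2,&1/2\\ 1/3,&2/3\end{smallmatrix}\big|-\tfrac{27d}{4c^3}\right]_q$; with $c=3k$ and $d=k^3+ak+b$ this gives precisely your prefactor $\phi(k^3+ak+b)$ and argument $-(k^3+ak+b)/(4k^3)$. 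So the strategy, the prefactor, and the argument of ${_2}G_2$ in your sketch are all correct, and the point-counting and shift are indeed the easy part.

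The gap is that everything after the shift --- the entire content of Theorem \ref{thm3} --- is left as an acknowledged ``obstacle'' rather than carried out, and that is where the theorem actually lives. Three concrete steps are missing. First, after expanding the count into Gauss sums one is left with quadratic-twisted sums $G_{-l+\frac{q-1}{2}}$; the paper removes these with the Davenport--Hasse relation for $m=2$ (Theorem \ref{lemma3}), and it is this step, combined with $G_kG_{-k}=qT^k(-1)$, that produces both the overall factor $q$ and the factor $T^l(4)$, leaving $\frac{1}{q-1}\sum_l \frac{G_{-2l}^2G_{3l}}{G_{-l}}\,T^l(4d/c^3)$; the norm relation you cite does not by itself accomplish this. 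Second, the parameters $\tfrac13,\tfrac23$ and $\tfrac12,\tfrac12$ are manufactured from $\prod_i\Gamma_p(\langle \tfrac{3lp^i}{q-1}\rangle)$ and $\prod_i\Gamma_p(\langle \tfrac{-2lp^i}{q-1}\rangle)$ via the product formula \eqref{eq2}; making this valid uniformly for all $0\le l\le q-2$ requires the reindexing argument of Lemma \ref{lemma4}, and the resulting units $\omega(t^{\pm tl})$ are exactly what rescale the argument from $4d/c^3$ to $27d/(4c^3)$ --- you name this mechanism but do not execute it. Third, the exponent of $\pi$ delivered by Gross--Koblitz, namely $\sum_i\{2\langle\tfrac{-2lp^i}{q-1}\rangle+\langle\tfrac{3lp^i}{q-1}\rangle-\langle\tfrac{-lp^i}{q-1}\rangle\}$, must be shown to coincide with the floor-function exponent of $-p$ in Definition \ref{defin1}; this is a separate combinatorial identity (the paper's Lemma \ref{lemma5}), proved by a case analysis on $\lfloor 6lp^i/(q-1)\rfloor$ modulo $6$, together with a reflection-formula step that absorbs the $l=0$ boundary term and introduces the sign in the argument. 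Since you explicitly defer all of this bookkeeping, what you have is a correct plan that mirrors the paper's route, but not a proof.
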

\begin{thm}\label{thm2}
Let $q=p^r$, $p>3$ be a prime. Consider an elliptic curve $E_{a, b}/\mathbb{F}_q$ of the form $E_{a, b}: y^2=x^3+ax+b$ with $j(E_{a, b})\neq 1728$. 
If $x^3+ax+b=0$ has a non zero solution in $\mathbb{F}_q$, then 
\begin{align}
a_q(E_{a,b})=\phi(-3h^2-a)\cdot q \cdot {_2}G_2\left[ \begin{array}{cc}
              \frac{1}{2}, & \frac{1}{2} \\
              \frac{1}{4}, & \frac{3}{4}
            \end{array}|\frac{4(3h^2+a)}{9h^2}
 \right]_q,\notag
\end{align}
where $h^3+ah+b=0$.
\end{thm}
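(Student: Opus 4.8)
The plan is to reduce the trace of Frobenius to a single quadratic character sum, put the curve into a form with a rational $2$-torsion point using the given root, and then expand that sum into Gauss sums so that the Gross--Koblitz formula reproduces exactly the $p$-adic gamma quotients of Definition \ref{defin1}.

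First I would use the hypothesis $h^3+ah+b=0$ with $h\neq 0$ to translate $x\mapsto x+h$, turning $E_{a,b}$ into $y^2=x\bigl(x^2+3hx+(3h^2+a)\bigr)$. Writing $c=3h$ and $d=3h^2+a$ (both nonzero by nonsingularity together with $j(E_{a,b})\neq 1728$), the count ``the number of $y$ for each $x$ is $1+\phi(f(x))$'' gives $\#E_{a,b}(\mathbb{F}_q)=q+1+\sum_{x}\phi(x)\phi(x^2+cx+d)$, so by \eqref{eq18}
\begin{equation*}
a_q(E_{a,b})=-\sum_{x\in\mathbb{F}_q}\phi(x)\,\phi(x^2+cx+d).
\end{equation*}
The substitution $x=\tfrac{c}{2}(w-1)$ (legitimate since $c\neq 0$) rescales the quadratic to $\tfrac{c^2}{4}\bigl(w^2-(1-\lambda)\bigr)$ with $\lambda=\tfrac{4d}{c^2}=\tfrac{4(3h^2+a)}{9h^2}$, which is exactly the stated argument, and it brings out the prefactors that will eventually combine into $\phi(-3h^2-a)=\phi(-d)$.

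Next I would expand the two quadratic-character factors into Gauss sums: expressing each value of $\phi$ through the additive character $\theta$ and applying orthogonality, the sum collapses to a single sum over $j$ with $0\le j\le q-2$ of products of Gauss sums $g(\omega^{m})$ weighted by $\overline{\omega}^{\,j}(\lambda)$, which is precisely the summation index of Definition \ref{defin1}. The quadratic term $w^2$ forces a Hasse--Davenport (duplication) relation, and this is exactly the step that manufactures the quartic characters $\omega^{(q-1)/4},\omega^{3(q-1)/4}$ responsible for the bottom parameters $\tfrac14,\tfrac34$, while the two quadratic factors account for the top parameters $\tfrac12,\tfrac12$.

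Finally I would invoke the Gross--Koblitz formula to rewrite each $g(\omega^{m})$ as a product over $0\le k\le r-1$ of $\Gamma_p$-values times a power of $-p$; after regrouping, the powers of $-p$ assemble into the floor exponents $-\lfloor\langle a_ip^k\rangle-\tfrac{jp^k}{q-1}\rfloor-\lfloor\langle -b_ip^k\rangle+\tfrac{jp^k}{q-1}\rfloor$ and the gamma factors into the quotients of Definition \ref{defin1}, yielding $a_q(E_{a,b})=\phi(-3h^2-a)\cdot q\cdot {_2}G_2[\dots]$. I expect the main obstacle to be precisely this final matching done uniformly in $q$: evaluating $\sum_x\phi(x)\phi(x^2+cx+d)$ as a single-index product of Gauss sums \emph{without} assuming that $x^2+cx+d$ splits over $\mathbb{F}_q$, and then checking that the Hasse--Davenport and Gross--Koblitz bookkeeping reproduces the normalizing $(-p)$-powers of the definition exactly, with the quadratic prefactor emerging with the correct sign $\phi(-d)$.
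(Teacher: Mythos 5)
Your overall strategy is the paper's: translate by the root $h$ to reach $y^2=x^3+3hx^2+(3h^2+a)x$, turn the point count into Gauss sums, apply Davenport--Hasse with $m=2$ to remove the half-index shifts, and finish with Gross--Koblitz. The paper isolates the middle step as Theorem \ref{thm5} for a general curve $y^2=x^3+fx^2+gx$, starting from the additive-character identity for $\sum_z\theta(zP(x,y))$ and collapsing a triple Gauss-sum via orthogonality to the single sum $\frac{1}{q-1}\sum_l\frac{G_{-2l}^2G_{4l}}{G_{-l}^2G_{2l}}T^l(g/f^2)$; your quadratic-character count $-\sum_x\phi(x)\phi(x^2+cx+d)$ is an equivalent starting point, and the extra substitution $x=\frac{c}{2}(w-1)$ is harmless but unnecessary, since the argument $4g/f^2$ falls out of the Gauss-sum computation without it.

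Two ingredients are missing from your outline, and one claim is off. First, Gross--Koblitz alone produces $\Gamma_p$-values at $\langle\frac{mp^i}{q-1}\rangle$ for $m\in\{-l,-2l,2l,4l\}$; to reach the parameters $\frac12,\frac12$ and $\frac14,\frac34$ of Definition \ref{defin1} you must reindex these using the $p$-adic multiplication formula \eqref{eq2} (the paper's Lemma \ref{lemma4}, applied with $t=2$ and $t=4$). Calling this ``regrouping'' hides a genuine step. Second, the power of $-p$ coming from Gross--Koblitz is a sum of fractional parts $\langle\cdot\rangle$, whereas Definition \ref{defin1} prescribes a sum of floors; equating the two is the content of the paper's Lemma \ref{lemma6}, proved by a case analysis on $\lfloor 4lp^i/(q-1)\rfloor$, and is not automatic. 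Finally, the prefactor your substitution extracts is $\phi(c/2)$, a function of $h$ alone; it is not the source of $\phi(-3h^2-a)$. In the paper that sign arises from shifting the summation index $l\mapsto l-\frac{q-1}{2}$ (which pulls out $\phi(g)$) together with $G_{\frac{q-1}{2}}^2=q\phi(-1)$ from Lemma \ref{fusi3}. None of these is a fatal obstruction, but each is a concrete step your plan would still have to supply.
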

McCarthy proved the Theorem \ref{mc} over $\mathbb{F}_p$. Along the proof of the Theorem \ref{thm1}
and Theorem \ref{thm2} (which are proved for $\mathbb{F}_q$), we have verified that the Theorem \ref{mc} is also true for $\mathbb{F}_q$. 
Hence, we have the following corollary which gives nice transformation formulas between special values of the function $_{2}G_{2}[\cdots]$ 
with different parameters.
\begin{cor}\label{cor1}
 Let $q=p^r$, $p>3$ be a prime. Let $a, b \in \mathbb{F}_q^{\times}$. Then\\
${_2}G_2\left[ \begin{array}{cc}
              \frac{1}{4}, & \frac{3}{4} \\
              \frac{1}{3}, & \frac{2}{3}
            \end{array}|-\frac{27b^2}{4a^3}
 \right]_q=\left\{
                                  \begin{array}{ll}
                                    \phi(b(k^3+ak+b))\cdot {_2}G_2\left[ \begin{array}{cc}
                                                         \frac{1}{2}, & \frac{1}{2} \\
                                                         \frac{1}{3}, & \frac{2}{3}
                                                       \end{array}|-\frac{k^3+ak+b}{4k^3}\right]_q \hbox{if~ $a=-3k^2$;} \\
                                    \phi(-b(3h^2+a))\cdot {_2}G_2\left[ \begin{array}{cc}
              \frac{1}{2}, & \frac{1}{2} \\
              \frac{1}{4}, & \frac{3}{4}
            \end{array}|\frac{4(3h^2+a)}{9h^2}
 \right]_q  \hbox{if ~$h^3+ah+b=0$.}
                                  \end{array}
                                \right.$
\end{cor}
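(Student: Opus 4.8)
The plan is to read the trace of Frobenius $a_q(E_{a,b})$ in two different ways and equate the resulting expressions. The point is that the corollary is a purely formal consequence of the three trace formulas already at our disposal: McCarthy's formula (Theorem~\ref{mc}, in its $\mathbb{F}_q$-version), Theorem~\ref{thm1}, and Theorem~\ref{thm2}. First I would record that, since $a,b\in\mathbb{F}_q^{\times}$, the $j$-invariant $j(E_{a,b})=1728\cdot\frac{4a^3}{4a^3+27b^2}$ is neither $0$ nor $1728$ (these would force $a=0$ or $b=0$, respectively), so the $\mathbb{F}_q$-version of Theorem~\ref{mc} applies and expresses $a_q(E_{a,b})$ through the ${_2}G_2$ with upper parameters $\frac14,\frac34$ and lower parameters $\frac13,\frac23$, carrying the sign factor $\phi(b)$.

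For the first branch, suppose $a=-3k^2$. Then $3k^2+a=0$ and $-a/3=k^2$ is a nonzero quadratic residue, so the hypotheses of Theorem~\ref{thm1} are satisfied and we obtain a second expression for the same quantity $a_q(E_{a,b})$. I would then equate the two expressions, cancel the common factor $q$, and multiply through by $\phi(b)$; this last step is legitimate because $\phi(b)^2=1$ for $b\neq0$. Collecting the sign factors via the multiplicativity $\phi(b)\phi(k^3+ak+b)=\phi(b(k^3+ak+b))$ isolates the target ${_2}G_2$ and yields the first branch of the stated identity.

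For the second branch, suppose $h^3+ah+b=0$. Here I would first observe that $b\neq0$ forces the root $h$ to be nonzero, so the hypotheses of Theorem~\ref{thm2} hold (and $j(E_{a,b})\neq1728$); this provides a third expression for $a_q(E_{a,b})$. Equating it with McCarthy's expression, cancelling $q$, and again multiplying by $\phi(b)$ while simplifying the signs through $\phi(b)\phi(-3h^2-a)=\phi(-b(3h^2+a))$ would give the second branch.

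The step I expect to be the genuine obstacle lies not in the corollary itself, whose derivation is the short equate-and-simplify procedure above, but in the legitimacy of invoking Theorem~\ref{mc} over $\mathbb{F}_q$ rather than merely over $\mathbb{F}_p$. The hard part is therefore already absorbed into the proofs of Theorem~\ref{thm1} and Theorem~\ref{thm2}, in the course of which (as the authors note) the $\mathbb{F}_q$-analogue of McCarthy's theorem is verified. Granting that analogue, the corollary follows immediately, and no separate computation with the $p$-adic gamma function is needed.
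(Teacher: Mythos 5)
Your proposal is correct and matches the paper's intent exactly: the paper offers no separate argument for Corollary \ref{cor1}, presenting it as the immediate consequence of equating the $\mathbb{F}_q$-version of Theorem \ref{mc} with Theorems \ref{thm1} and \ref{thm2} and using $\phi(b)^2=1$, which is precisely your equate-and-simplify procedure. You also correctly identify that the only substantive point is the validity of McCarthy's formula over $\mathbb{F}_q$, which the authors note is verified along the proofs of Theorems \ref{thm1} and \ref{thm2}.
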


\section{Preliminaries}
Let $\widehat{\mathbb{F}_q^\times}$ denote the group of multiplicative characters of $\mathbb{F}_q^\times$. We extend the domain of each 
$\chi \in \mathbb{F}_q^{\times}$ to $\mathbb{F}_q$ by setting $\chi(0):=0$ including the trivial character $\varepsilon$. 
The \emph{orthogonality relations} for multiplicative characters are listed in the following lemma.
\begin{lem}\emph{(\cite[Chapter 8]{ireland}).}\label{lemma2} We have
\begin{enumerate}
\item $\displaystyle\sum_{x\in\mathbb{F}_q}\chi(x)=\left\{
                                  \begin{array}{ll}
                                    q-1 & \hbox{if~ $\chi=\varepsilon$;} \\
                                    0 & \hbox{if ~~$\chi\neq\varepsilon$.}
                                  \end{array}
                                \right.$
\item $\displaystyle\sum_{\chi\in \widehat{\mathbb{F}_q^\times}}\chi(x)~~=\left\{
                            \begin{array}{ll}
                              q-1 & \hbox{if~~ $x=1$;} \\
                              0 & \hbox{if ~~$x\neq1$.}
                            \end{array}
                          \right.$
\end{enumerate}
\end{lem}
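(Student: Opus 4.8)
The plan is to prove both orthogonality relations by the same standard ``character shift'' argument, using only that $\mathbb{F}_q^\times$ is a finite abelian group, that it is cyclic of order $q-1$, and the extension convention $\chi(0):=0$. In each case I isolate the nontrivial situation, exhibit an element (or character) on which the relevant quantity acts nontrivially, and show that the sum is invariant under a multiplication by a root of unity different from $1$, which forces it to vanish.

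For part (1), I would first note that since every $\chi$, including $\varepsilon$, satisfies $\chi(0)=0$, the sum over $\mathbb{F}_q$ equals the sum over $\mathbb{F}_q^\times$. If $\chi=\varepsilon$ then $\chi(x)=1$ for each of the $q-1$ elements $x\in\mathbb{F}_q^\times$, giving the value $q-1$. If $\chi\neq\varepsilon$, choose $y\in\mathbb{F}_q^\times$ with $\chi(y)\neq 1$ and set $S:=\sum_{x\in\mathbb{F}_q^\times}\chi(x)$. Since $x\mapsto yx$ permutes $\mathbb{F}_q^\times$, multiplicativity of $\chi$ gives
\begin{equation*}
\chi(y)\,S=\sum_{x\in\mathbb{F}_q^\times}\chi(yx)=\sum_{x\in\mathbb{F}_q^\times}\chi(x)=S,
\end{equation*}
so that $(\chi(y)-1)S=0$, and hence $S=0$.

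For part (2), I would use the dual structure: $\widehat{\mathbb{F}_q^\times}$ is itself cyclic of order $q-1$. If $x=0$, then $\chi(0)=0$ for every $\chi$, so the sum is $0$; if $x=1$, then $\chi(1)=1$ for all $q-1$ characters, giving $q-1$. For $x\in\mathbb{F}_q^\times$ with $x\neq 1$, the key input is that the characters separate points, i.e. there exists $\psi\in\widehat{\mathbb{F}_q^\times}$ with $\psi(x)\neq 1$; this follows by picking a generator $g$ of the cyclic group $\mathbb{F}_q^\times$ and the character $\psi$ sending $g$ to a primitive $(q-1)$-th root of unity, which is then nontrivial on every element other than the identity. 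Writing $T:=\sum_{\chi\in\widehat{\mathbb{F}_q^\times}}\chi(x)$ and using that $\chi\mapsto\psi\chi$ permutes $\widehat{\mathbb{F}_q^\times}$, I obtain
\begin{equation*}
\psi(x)\,T=\sum_{\chi}(\psi\chi)(x)=\sum_{\chi}\chi(x)=T,
\end{equation*}
whence $(\psi(x)-1)T=0$ and therefore $T=0$.

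The argument is entirely routine; the only place a structural fact is needed is the separation-of-points claim in part (2), and this is precisely where the cyclicity of $\mathbb{F}_q^\times$ (equivalently, the nondegeneracy of the pairing between $\mathbb{F}_q^\times$ and its character group) enters. Everything else reduces to the single observation that summing a nontrivial character over the group, or summing all characters over a fixed non-identity point, is invariant under a shift that scales the sum by a root of unity $\neq 1$, so the sum must be zero.
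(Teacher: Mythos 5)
Your proof is correct and complete: the paper itself gives no proof of this lemma, citing it to \cite[Chapter 8]{ireland}, and your character-shift argument (together with the correct handling of the convention $\chi(0):=0$, including the case $x=0$ in part (2) and the use of cyclicity of $\mathbb{F}_q^\times$ to separate points) is precisely the standard argument found in that source. Nothing is missing.
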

\par Let $\mathbb{Z}_p$ denote the ring of $p$-adic integers, $\mathbb{Q}_p$ the field of $p$-adic numbers, $\overline{\mathbb{Q}_p}$ 
the algebraic closure of $\mathbb{Q}_p$, and $\mathbb{C}_p$ the completion of $\overline{\mathbb{Q}_p}$. 
Let $\mathbb{Z}_q$ be the ring of integers in the unique unramified extension of $\mathbb{Q}_p$ with residue field $\mathbb{F}_q$. 
Recall that $\mathbb{Z}_q^{\times}$ contains all $(q-1)$-th root of unity. Therefore, we can consider multiplicative charcaters of $\mathbb{F}_q^\times$
to be maps $\chi: \mathbb{F}_q^{\times} \rightarrow \mathbb{Z}_q^{\times}$.
\par We now introduce some properties of Gauss sums. For further details, see \cite{evans}. Let $\zeta_p$ be a fixed primitive root of unity 
in $\overline{\mathbb{Q}_p}$. Then the additive character 
$\theta: \mathbb{F}_q \rightarrow \mathbb{Q}_p(\zeta_p)$ is defined by
\begin{align}
\theta(\alpha)=\zeta_p^{\text{tr}(\alpha)},\notag
\end{align}
where $\text{tr}: \mathbb{F}_q \rightarrow \mathbb{F}_p$ is the trace map given by
$$\text{tr}(\alpha)=\alpha + \alpha^p + \alpha^{p^2}+ \cdots + \alpha^{p^{r-1}}.$$
For $\chi \in \widehat{\mathbb{F}_q^\times}$, the \emph{Gauss sum} is defined by
\begin{align}
G(\chi):=\sum_{x\in \mathbb{F}_q}\chi(x)\theta(x).\notag
\end{align}
We let $T$ denote a fixed generator of $\widehat{\mathbb{F}_q^\times}$. The Gauss sum $G(T^m)$ is denoted by $G_m$. 
The following lemma provides a formula for the multiplicative inverse of a Gauss sum.
\begin{lem}\emph{(\cite[Eqn. 1.12]{greene}).}\label{fusi3}
If $k\in\mathbb{Z}$ and $T^k\neq\varepsilon$, then
$$G_kG_{-k}=qT^k(-1).$$
\end{lem}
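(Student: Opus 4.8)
The plan is to prove this as the standard evaluation of the product of a Gauss sum with its conjugate, relying only on the orthogonality relation of Lemma \ref{lemma2} and on the nontriviality of the additive character $\theta$. Write $\chi=T^k$, which is nontrivial since $T^k\neq\varepsilon$ by hypothesis, so that $G_k=G(\chi)$ and $G_{-k}=G(\chi^{-1})$ with $\chi^{-1}=T^{-k}$. The target identity then reads $G(\chi)G(\chi^{-1})=q\,\chi(-1)$.

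First I would expand the product straight from the definition of the Gauss sum. Since $\chi(0)=\chi^{-1}(0)=0$, the summation effectively runs over $\mathbb{F}_q^\times$, giving
\begin{align*}
G(\chi)G(\chi^{-1})=\sum_{x\in\mathbb{F}_q^\times}\sum_{y\in\mathbb{F}_q^\times}\chi(x)\chi^{-1}(y)\,\theta(x+y).
\end{align*}
The key manipulation is the change of variables $x=ty$ with $t\in\mathbb{F}_q^\times$; as $y$ ranges over $\mathbb{F}_q^\times$ and $t$ over $\mathbb{F}_q^\times$, the pair $(x,y)$ ranges bijectively over $\mathbb{F}_q^\times\times\mathbb{F}_q^\times$. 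Using $\chi(ty)\chi^{-1}(y)=\chi(t)$, this recasts the double sum as $\sum_{t\in\mathbb{F}_q^\times}\chi(t)\sum_{y\in\mathbb{F}_q^\times}\theta\big((t+1)y\big)$.

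Next I would evaluate the inner additive sum by splitting on whether $t+1$ vanishes. Because $\theta$ is a nontrivial additive character, $\sum_{z\in\mathbb{F}_q}\theta(z)=0$, and hence $\sum_{z\in\mathbb{F}_q^\times}\theta(z)=-1$. For $t\neq-1$ the map $y\mapsto(t+1)y$ permutes $\mathbb{F}_q^\times$, so the inner sum equals $-1$; for $t=-1$ every term is $\theta(0)=1$, so the inner sum equals $q-1$. Substituting these values, extending the sum over $t\neq-1$ to all of $\mathbb{F}_q^\times$ (which contributes an extra $+\chi(-1)$) and combining with the $t=-1$ term $\chi(-1)(q-1)$, I obtain
\begin{align*}
G(\chi)G(\chi^{-1})=-\sum_{t\in\mathbb{F}_q^\times}\chi(t)+\chi(-1)\cdot q.
\end{align*}

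Finally, since $\chi\neq\varepsilon$, the orthogonality relation of Lemma \ref{lemma2}(1) forces $\sum_{t\in\mathbb{F}_q^\times}\chi(t)=0$, leaving $G(\chi)G(\chi^{-1})=q\,\chi(-1)=q\,T^k(-1)$, as claimed. The calculation is entirely routine; the only points requiring care are the bookkeeping in the change of variables and the case split, and the consistent use of the convention $\chi(0)=0$ so that all sums are genuinely over $\mathbb{F}_q^\times$. I do not anticipate any serious obstacle: the vanishing of the full additive sum $\sum_{z\in\mathbb{F}_q}\theta(z)$ (a consequence of $\theta$ being nontrivial) is the lone input beyond the stated orthogonality lemma.
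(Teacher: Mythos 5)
Your proof is correct and complete. Note that the paper offers no proof of this lemma at all: it is quoted verbatim from Greene \cite[Eqn.~1.12]{greene}, so there is nothing internal to compare against, and your argument supplies the standard one. Each step checks out: the substitution $x=ty$ is a bijection of $\mathbb{F}_q^\times\times\mathbb{F}_q^\times$ onto itself, $\chi(ty)\chi^{-1}(y)=\chi(t)$ since $y\neq 0$, the inner sum $\sum_{y\in\mathbb{F}_q^\times}\theta((t+1)y)$ equals $-1$ or $q-1$ according as $t\neq -1$ or $t=-1$ (using that $\theta$ is nontrivial, which holds here because the trace map $\mathbb{F}_q\to\mathbb{F}_p$ is surjective), and orthogonality kills $\sum_{t\in\mathbb{F}_q^\times}\chi(t)$ exactly because $T^k\neq\varepsilon$ --- the one place the hypothesis is used. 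One small point worth appreciating: the familiar textbook shortcut $|G(\chi)|^2=q$ via complex conjugation is unavailable in this paper's setting, since the characters take values in $\mathbb{Z}_q^\times$ and the Gauss sums live in a $p$-adic field rather than $\mathbb{C}$; your purely algebraic substitution argument is precisely the right formulation in this context.
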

Using orthogonality, we can write $\theta$ in terms of Gauss sums as given in the following lemma.
\begin{lem}\emph{(\cite[Lemma 2.2]{Fuselier}).}\label{lemma1}
For all $\alpha \in \mathbb{F}_q^{\times}$, $$\theta(\alpha)=\frac{1}{q-1}\sum_{m=0}^{q-2}G_{-m}T^m(\alpha).$$
\end{lem}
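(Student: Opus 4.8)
The plan is to start from the right-hand side and collapse it back to $\theta(\alpha)$ using the definition of the Gauss sum together with the orthogonality relation recorded in Lemma \ref{lemma2}. First I would replace each $G_{-m}$ by its defining sum $G_{-m}=G(T^{-m})=\sum_{x\in\mathbb{F}_q}T^{-m}(x)\theta(x)$ and substitute this into $\frac{1}{q-1}\sum_{m=0}^{q-2}G_{-m}T^m(\alpha)$. Since both sums are finite, I may interchange the order of summation, move the sum over $m$ inside, and regroup the character values via $T^{-m}(x)T^m(\alpha)=T^m(\alpha x^{-1})$, valid for $x\neq 0$; the term $x=0$ contributes nothing because $T^{-m}(0)=0$ under the convention $\chi(0):=0$.

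After this interchange the expression reads $\frac{1}{q-1}\sum_{x\in\mathbb{F}_q^\times}\theta(x)\sum_{m=0}^{q-2}T^m(\alpha x^{-1})$. The key observation is that, as $m$ ranges over $0,1,\dots,q-2$, the powers $T^m$ enumerate every element of $\widehat{\mathbb{F}_q^\times}$ exactly once, because $T$ generates this cyclic group of order $q-1$. Hence the inner sum equals $\sum_{\chi\in\widehat{\mathbb{F}_q^\times}}\chi(\alpha x^{-1})$, and Lemma \ref{lemma2}(2) evaluates it to $q-1$ when $\alpha x^{-1}=1$ and to $0$ otherwise. Because $\alpha$ is a fixed nonzero element, the condition $\alpha x^{-1}=1$ isolates the single index $x=\alpha$.

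Consequently only the summand $x=\alpha$ survives, yielding $\frac{1}{q-1}\,\theta(\alpha)\,(q-1)=\theta(\alpha)$, which is precisely the asserted identity. The argument is short, so I do not anticipate a genuine obstacle; the only points demanding care are the bookkeeping that forces the $x=0$ term to vanish and the verification that $\{T^m:0\le m\le q-2\}$ is a faithful enumeration of the whole character group, so that Lemma \ref{lemma2}(2) applies verbatim. I expect this interchange-and-orthogonality step to be the one substantive move, with everything else being formal manipulation.
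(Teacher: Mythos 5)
Your argument is correct: the paper gives no proof of this lemma (it simply cites Fuselier, Lemma 2.2), and your substitution-plus-orthogonality computation is exactly the standard argument used in that source, with the two delicate points (the convention $\chi(0):=0$ killing the $x=0$ terms, and $\{T^m\}_{m=0}^{q-2}$ exhausting $\widehat{\mathbb{F}_q^\times}$) handled correctly.
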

\begin{thm}\emph{(Davenport-Hasse Relation \cite{Lang}).}\label{lemma3}
Let $m$ be a positive integer and let $q=p^r$ be a prime power such that $q\equiv 1 (\text{mod}~m)$. For multiplicative characters 
$\chi, \psi \in \widehat{\mathbb{F}_q^\times}$, we have
\begin{align}
\prod_{\chi^m=1}G(\chi \psi)=-G(\psi^m)\psi(m^{-m})\prod_{\chi^m=1}G(\chi).
\end{align}
\end{thm}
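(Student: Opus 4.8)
The plan is to prove the relation by collapsing a product of Gauss sums into a single Gauss sum and then pinning down the resulting constant, following the classical circle of ideas around Gauss and Jacobi sums. First I would record the structural reduction. Since $q\equiv 1\pmod m$ we have $m\mid q-1$, so the set $\{\chi\in\widehat{\mathbb{F}_q^\times}:\chi^m=\varepsilon\}$ is cyclic of order exactly $m$; fixing the generator $\eta=T^{(q-1)/m}$ of exact order $m$, the two products in the statement both run over $\chi=\eta^i$ for $0\le i\le m-1$. I would then dispose of the degenerate $\psi$: if $\eta^i\psi=\varepsilon$ for some $i$ then $G(\eta^i\psi)=G(\varepsilon)=-1$, and if $\psi^m=\varepsilon$ the right-hand side likewise simplifies, so these cases can be checked directly. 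For generic $\psi$ (all $\eta^i\psi\ne\varepsilon$ and $\psi^m\ne\varepsilon$) every Gauss sum appearing is nondegenerate, and Lemma \ref{fusi3} keeps the magnitudes under control.

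Next I would collapse the product. Introducing the Jacobi sum $J(\chi_1,\dots,\chi_n)=\sum_{t_1+\cdots+t_n=1}\chi_1(t_1)\cdots\chi_n(t_n)$ and using the standard companion to Lemma \ref{fusi3}, namely $G(\alpha)G(\beta)=G(\alpha\beta)\,J(\alpha,\beta)$ for nontrivial $\alpha,\beta,\alpha\beta$, I would repeatedly merge the factors $G(\eta^{0}\psi),G(\eta^{1}\psi),\dots,G(\eta^{m-1}\psi)$ into the single Gauss sum $G\!\left(\psi^m\eta^{m(m-1)/2}\right)$ times an accumulated product of Jacobi sums. Here $\eta^{m(m-1)/2}=\varepsilon$ when $m$ is odd and equals the quadratic character $\phi$ when $m$ is even, so this parity correction must be tracked and absorbed; it is one source of the overall sign. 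What remains is to evaluate the accumulated Jacobi factor and to verify that, together with the parity correction and the base product $\prod_{\chi^m=1}G(\chi)$, it reproduces the constant $-\psi(m^{-m})$.

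The arithmetic heart is this evaluation, and I would carry it out by interpreting the Jacobi sums as weighted point counts and changing variables. The power map $x\mapsto x^m$ is $m$-to-one onto the subgroup of $m$-th powers, and the orthogonality relations of Lemma \ref{lemma2} convert the sum over $m$-th powers into the Gauss sum $G(\psi^m)$; normalizing the defining constraint $\sum t_i=1$ under this scaling is exactly what should manufacture the factor $\psi(m^{-m})$, with each of the $m$ variables contributing $\psi(m^{-1})$. This step is where the genuine content lies: magnitude considerations give only $|G(\chi)|=\sqrt q$ for nontrivial $\chi$, and even the Stickelberger prime-ideal factorization of Gauss sums pins the two sides down only up to a root of unity, so the exact constant $-\psi(m^{-m})$ cannot be read off from size or divisibility alone and must be extracted by the change-of-variables bookkeeping.

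The step I expect to be the main obstacle is precisely this determination of the root of unity, both the sign $-1$ and the character value $\psi(m^{-m})$, with the parity of $m$ entering through $\eta^{m(m-1)/2}$. If the direct Jacobi-sum computation proves too delicate to control uniformly in $m$, an alternative route, well suited to the $p$-adic setting of this paper, is to invoke the Gross--Koblitz formula, which expresses each Gauss sum as a power of the uniformizer $\pi$ (with $\pi^{p-1}=-p$) times a product of values of Morita's $p$-adic gamma function $\Gamma_p$; under this dictionary the desired identity becomes the Gauss multiplication (distribution) formula for $\Gamma_p$, and the $\pi$-exponent count reduces to the elementary distribution relation for $\sum_{j=0}^{m-1}\langle (x+j)/m\rangle$. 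Either route reduces the theorem to a clean, checkable identity; the structural reductions above are routine, and only the constant-tracking requires real care.
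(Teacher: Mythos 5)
The first thing to note is that the paper contains no proof of this statement: Theorem \ref{lemma3} is quoted from Lang \cite{Lang} and used as a black box (only its $m=2$ instance is ever invoked, at \eqref{eq12} and again in the proof of Theorem \ref{thm5}). So your proposal must stand on its own, and your primary route does not close. The iterated merging via $G(\alpha)G(\beta)=J(\alpha,\beta)G(\alpha\beta)$ has an unacknowledged failure mode: that identity requires $\alpha\beta\neq\varepsilon$, and the partial-product character $\psi^t\eta^{t(t-1)/2}$ can be trivial at an intermediate step even when every $\eta^i\psi$ and $\psi^m$ is nontrivial (e.g.\ $q\equiv 1\pmod 8$, $m=4$, $\psi$ of order $8$ with $\psi^2=\eta^{-1}$: then $\psi\notin\langle\eta\rangle$ and $\psi^4\neq\varepsilon$, yet already $G(\psi)G(\eta\psi)=G(\psi)G(\psi^{-1})=q\,\psi(-1)$ by Lemma \ref{fusi3}, and the merging identity breaks). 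Worse, for even $m$ the collapse lands on $G(\psi^m\phi)$ rather than $G(\psi^m)$, and converting one to the other with the right constant is exactly the $m=2$ case of the theorem you are trying to prove (it is the identity \eqref{eq12}). So the ``parity correction plus Jacobi bookkeeping'' that you defer is not bookkeeping --- it is the entire content of the theorem, and route~1 as sketched is a gap, not a proof.

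Your fallback route, by contrast, is correct and completable, and it is essentially the paper's own machinery run in reverse; you should commit to it and discard route~1. Write $\psi=\overline{\omega}^a$; since $q\equiv 1\pmod m$ forces $p\nmid m$, the characters with $\chi^m=\varepsilon$ are $\overline{\omega}^{j(q-1)/m}$, $0\leq j\leq m-1$. Gross--Koblitz (Theorem \ref{thm4}) converts both sides into signs, powers of $\pi$, and $\Gamma_p$-products, uniformly in $\psi$ with no degenerate cases. The signs match: $(-1)^m$ from the $m$ Gauss sums on the left, and $(-1)^{m+2}$ on the right (the explicit minus, $G(\psi^m)$, and the $m$ factors of $\prod_{\chi^m=1}G(\chi)$, including $G(\varepsilon)=-1$). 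The $\pi$-exponents match by the distribution identity $\sum_{j=0}^{m-1}\langle y+\frac{j}{m}\rangle=\langle my\rangle+\frac{m-1}{2}$ applied with $y=\frac{ap^i}{q-1}$, after noting that $j\mapsto jp^i$ permutes residues mod $m$, which also gives $\sum_{h=1}^{m-1}\langle\frac{hp^i}{m}\rangle=\frac{m-1}{2}$. The $\Gamma_p$-products match by \eqref{eq2} with $x=\frac{ma}{q-1}-k$ together with the same cyclic shift of the index $h$ that the paper carries out in \eqref{eq4}; this is precisely the proof technique of Lemma \ref{lemma4}, generalized from the case $\psi\in\{\varepsilon$-twists$\}$ there to arbitrary $a$. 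The constant you worried about falls out mechanically: $(1-x)(1-q)\equiv ma\pmod{q-1}$, so $\omega\bigl(m^{(1-x)(1-q)}\bigr)=\omega(m)^{ma}=\overline{\omega}^a(m^{-m})=\psi(m^{-m})$, which is the exact factor in the statement. One caution: you must use a proof of the multiplication formula \eqref{eq2} that is independent of Davenport--Hasse (direct $p$-adic proofs from the definition of $\Gamma_p$ exist), since some treatments derive the $\Gamma_p$ product formula \emph{from} the Hasse--Davenport relation, and that would make your argument circular.
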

\par
We now recall the definition of $p$-adic gamma function. For $n \in\mathbb{Z}^+$, 
the $p$-adic gamma function $\Gamma_p(n)$ is defined as
\begin{align}
\Gamma_p(n):=(-1)^n\prod_{0<j<n,p\nmid j}j\notag
\end{align}
and one extends it to all $x\in\mathbb{Z}_p$ by setting $\Gamma_p(0):=1$ and
\begin{align}
\Gamma_p(x):=\lim_{n\rightarrow x}\Gamma_p(n)\notag
\end{align}
for $x\neq0$, where $x$ runs through any sequence of positive integers $p$-adically approaching $x$. This limit exists, is independent of how $n$ approaches $x$, 
and determines a continuous function on $\mathbb{Z}_p$ with values in $\mathbb{Z}_p^{\times}$. 
\par We now state a product formula for the $p$-adic gamma function from \cite[Theorem 3.1]{gross}. 
Let $\omega: \mathbb{F}_q^\times \rightarrow \mathbb{Z}_q^{\times}$ be the Teichm\"{u}ller character. 
For $a\in\mathbb{F}_q^\times$, the value $\omega(a)$ is just the $(q-1)$-th root of unity in $\mathbb{Z}_q$ such that $\omega(a)\equiv a$ $($mod $p)$. 
We denote by $\overline{\omega}$ the inverse of $\omega$. 
If $m\in\mathbb{Z}^+$, $p\nmid m$ and $x$ satisfies $0 \leq x \leq 1$ and $(q-1)x\in \mathbb{Z}$, then
\begin{align}\label{eq2}
\prod_{i=0}^{r-1}\prod_{h=0}^{m-1}\Gamma_p\left(\langle(\frac{x+h}{m})p^i\rangle\right)=\omega \left( m^{(1-x)(1-q)}\right)
\prod_{i=0}^{r-1}\Gamma_p(\langle xp^i\rangle)
\prod_{h=1}^{m-1}\Gamma_p\left(\langle\frac{hp^i}{m}\rangle\right).
\end{align}
We also note that
\begin{align}\label{eq26}
\Gamma_p(x)\Gamma_p(1-x)=(-1)^{x_0},
\end{align}
where $x_0\in \{1, 2, \ldots, p \}$ satisfies $x_0\equiv x$ $($mod $p)$. 
\par The Gross-Koblitz formula allow us to relate the Gauss sums and the $p$-adic gamma function. 
Let $\pi \in \mathbb{C}_p$ be the fixed root of $x^{p-1} + p=0$ which satisfies 
$\pi \equiv \zeta_p-1$ $($mod $(\zeta_p-1)^2)$. Then we have the following result.
\begin{thm}\emph{(Gross, Koblitz \cite{gross}).}\label{thm4} For $a\in \mathbb{Z}$ and $q=p^r$,
\begin{align}
G(\overline{\omega}^a)=-\pi^{(p-1)\sum_{i=0}^{r-1}\langle\frac{ap^i}{q-1} \rangle}\prod_{i=0}^{r-1}\Gamma_p\left(\langle \frac{ap^i}{q-1} \rangle\right).\notag
\end{align}
\end{thm}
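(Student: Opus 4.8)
The plan is to prove the formula by Dwork's method, realizing the Gauss sum as the value of a $p$-adic analytic function built from the Artin--Hasse--Dwork exponential. First I would introduce the Dwork splitting function
\begin{align}
E(t)=\exp\left(\pi(t-t^p)\right),\notag
\end{align}
and record its two basic features: it is $p$-adically analytic on a disk of radius strictly larger than $1$ (overconvergence), and, with the normalization $\pi\equiv\zeta_p-1\pmod{(\zeta_p-1)^2}$ fixed in the excerpt, it satisfies $E(1)=\zeta_p$. Writing $E(t)=\sum_{n\geq0}c_nt^n$ with $c_n\in\mathbb{Z}_p[\pi]$, the $\pi$-adic valuations of the $c_n$ grow at least linearly in $n$, and this growth is exactly what will license the interchange of summation and product below.

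The second step is the splitting of the additive character. For $x\in\mathbb{F}_q^\times$ let $\omega(x)\in\mathbb{Z}_q^\times$ denote its Teichm\"{u}ller representative. Since $\text{tr}(x)=\sum_{i=0}^{r-1}x^{p^i}$ is the sum of the Frobenius conjugates, Dwork's lemma on the behaviour of $E$ under the substitution $t\mapsto t^p$ yields the product expansion
\begin{align}
\theta(x)=\zeta_p^{\text{tr}(x)}=\prod_{i=0}^{r-1}E\left(\omega(x)^{p^i}\right).\notag
\end{align}
Substituting this into the definition $G(\overline{\omega}^a)=\sum_{x\in\mathbb{F}_q^\times}\overline{\omega}^a(x)\theta(x)$ and expanding each factor $E$ as its power series turns the Gauss sum into a sum over monomials $\omega(x)^M$, with $M=\sum_{i=0}^{r-1}n_ip^i$, weighted by the products $\prod_i c_{n_i}$.

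The third step collapses this sum by orthogonality. Because $\overline{\omega}^a(x)=\omega(x)^{-a}$, summing over $x\in\mathbb{F}_q^\times$ reduces to $\sum_x\omega(x)^{M-a}$, which by Lemma \ref{lemma2}(1) equals $q-1$ when $M\equiv a\pmod{q-1}$ and vanishes otherwise. Only the monomials whose exponent lies in the correct residue class modulo $q-1$ survive, and the surviving terms assemble into a single $p$-adic expression indexed by the base-$p$ digits of $a$. Its dominant (lowest-valuation) contribution fixes the power of $\pi$; as a cross-check, the classical theorem of Stickelberger assigns $G(\overline{\omega}^a)$ the same $\pi$-adic valuation, namely $(p-1)\sum_{i=0}^{r-1}\langle ap^i/(q-1)\rangle$ since $v_p(\pi)=1/(p-1)$. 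The remaining unit factor is then matched with $\prod_{i=0}^{r-1}\Gamma_p\left(\langle ap^i/(q-1)\rangle\right)$ by comparing the Dwork coefficients $c_n$ against Morita's factorial definition of $\Gamma_p$ through its limiting description, the global sign $-1$ coming from the boundary normalization $E(1)=\zeta_p$.

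The hard part will be this final identification: proving that the $p$-adic analytic function delivered by Dwork's construction genuinely coincides, coefficient by coefficient, with the product of $p$-adic gamma values. This requires quantitative control of the overconvergence of $E$ to justify exchanging the infinite series with the finite product over $i$, followed by a delicate comparison of the surviving coefficients with $\Gamma_p$ via its defining limit, keeping precise track of the accumulated power of $\pi$ and of the overall sign at every stage. Once this dictionary between the Dwork coefficients and $\Gamma_p$ is in place, the asserted equality is obtained simply by reading off the two sides.
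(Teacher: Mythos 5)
You should know at the outset that the paper contains no proof of this statement: it is the Gross--Koblitz formula, quoted verbatim from \cite{gross} and used purely as a black box whose only role is to convert the Gauss-sum expressions arising in the proofs of Theorems \ref{thm3} and \ref{thm5} into quotients of $p$-adic gamma values. So there is no in-paper argument to compare yours against; your proposal must stand as a free-standing proof of the cited theorem, and judged that way it has a genuine gap.

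Your skeleton is the standard Dwork-method proof (essentially the one carried out in Lang \cite{Lang}), and its first three steps are sound: the splitting function $E(t)=\exp(\pi(t-t^p))$ with $E(1)=\zeta_p$; the factorization $\theta(x)=\prod_{i=0}^{r-1}E(\omega(x)^{p^i})$, which, as you correctly sense, genuinely requires overconvergence, since the formal telescoping $\prod_{i=0}^{r-1}E(t^{p^i})=\exp(\pi(t-t^{p^r}))$ would evaluate to $1$ at a Teichm\"{u}ller point; and the orthogonality reduction to exponents $M\equiv a\pmod{q-1}$. Everything after that, however, is a promissory note. The step you yourself label ``the hard part'' --- showing the surviving series equals $-\pi^{(p-1)\sum_{i=0}^{r-1}\langle ap^i/(q-1)\rangle}\prod_{i=0}^{r-1}\Gamma_p\left(\langle ap^i/(q-1)\rangle\right)$ --- is not a verification to be ``read off'' at the end; it is the entire content of the theorem beyond Stickelberger, and none of its ingredients appears in the sketch: the explicit values $c_n=\pi^n/n!$ for $0\leq n<p$, the valuation estimates on the general $c_n$ that isolate the minimal-valuation term among all $M\equiv a\pmod{q-1}$ (this carry-and-digit analysis is where the exponent $(p-1)\sum_i\langle ap^i/(q-1)\rangle$ actually comes from), and the congruence-plus-limit argument matching the resulting unit against Morita's $\Gamma_p$. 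Two further cautions: invoking Stickelberger as a ``cross-check'' is close to circular, since in this framework Stickelberger's valuation is normally \emph{proved} by exactly the analysis you are deferring; and attributing the global sign $-1$ to the normalization $E(1)=\zeta_p$ is not right as stated --- the sign emerges from the bookkeeping in the digit analysis, not from the boundary value alone. Note also that one cannot shortcut the general-$q$ case by proving $r=1$ and lifting with a Davenport--Hasse-type relation: the lifting relation only reaches characters pulled back through the norm, i.e.\ exponents $a$ divisible by $(q-1)/(p-1)$, so the full $r$-fold product of gamma factors must be extracted directly, as in \cite{Lang}.
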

\section{Proof of the results}
\par We first prove a lemma which we will use to prove the main results.
\begin{lem}\label{lemma4}
Let $p$ be a prime and $q=p^r$. For $0\leq j\leq q-2$ and $t\in \mathbb{Z^+}$ with $p\nmid t$, we have
\begin{align}\label{eq8}
\omega(t^{tj})\prod_{i=0}^{r-1}\Gamma_p\left(\langle \frac{tp^ij}{q-1}\rangle\right)
\prod_{h=1}^{t-1}\Gamma_p\left(\langle\frac{hp^i}{t}\rangle\right)
=\prod_{i=0}^{r-1}\prod_{h=0}^{t-1}\Gamma_p\left(\langle\frac{p^ih}{t}+\frac{p^ij}{q-1}\rangle\right)
\end{align}
and
\begin{align}\label{eq9}
\omega(t^{-tj})\prod_{i=0}^{r-1}\Gamma_p\left(\langle\frac{-tp^ij}{q-1}\rangle\right)
\prod_{h=1}^{t-1}\Gamma_p\left(\langle \frac{hp^i}{t}\rangle\right)
=\prod_{i=0}^{r-1}\prod_{h=0}^{t-1}\Gamma_p\left(\langle\frac{p^i(1+h)}{t}-\frac{p^ij}{q-1}\rangle \right).
\end{align}
\end{lem}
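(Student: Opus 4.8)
The plan is to recognize both identities as the product formula (\ref{eq2}) specialized to $m=t$, with two different choices of the free parameter $x$, after a small amount of bookkeeping with the power of $\omega$.

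The one preliminary observation I would make is that \emph{both} sides of (\ref{eq2}) are invariant under $x\mapsto x-1$, and hence under every integer translation of $x$. On the left this holds because $\langle\frac{(x-1)p^i}{m}\rangle=\langle\frac{(x+m-1)p^i}{m}\rangle$, the two arguments differing by the integer $p^i$, so the $m$ factors indexed by $h=0,\dots,m-1$ are merely permuted. On the right, $\Gamma_p(\langle xp^i\rangle)$ is unchanged for the same reason, the product over $h$ does not involve $x$, and the exponent $(1-x)(1-q)$ changes by $1-q$, contributing the harmless factor $\omega(m)^{1-q}=1$ since $\omega(m)^{q-1}=1$. Consequently (\ref{eq2}) holds for \emph{every} $x$ with $(q-1)x\in\mathbb{Z}$, not only for $0\le x\le1$; this is what I will need, because the numbers $\frac{tj}{q-1}$ occurring below range up to nearly $t$.

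For (\ref{eq8}) I would apply this extended formula with $m=t$ and $x=\frac{tj}{q-1}$, which is legitimate since $(q-1)x=tj\in\mathbb{Z}$ and $p\nmid t$. Substituting, the left-hand product of (\ref{eq2}) becomes $\prod_{i}\prod_{h=0}^{t-1}\Gamma_p(\langle\frac{jp^i}{q-1}+\frac{hp^i}{t}\rangle)$, which is exactly the right-hand side of (\ref{eq8}); on the right-hand side of (\ref{eq2}) the factor $\Gamma_p(\langle xp^i\rangle)$ becomes $\Gamma_p(\langle\frac{tjp^i}{q-1}\rangle)$. It then remains to simplify the prefactor: the exponent is $(1-\frac{tj}{q-1})(1-q)=(1-q)+tj$, so $\omega(t^{(1-x)(1-q)})=\omega(t)^{1-q}\,\omega(t^{tj})=\omega(t^{tj})$, again by $\omega(t)^{q-1}=1$. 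Collecting the factors yields (\ref{eq8}).

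For (\ref{eq9}) the argument is structurally identical; the only change is the choice $x=1-\frac{tj}{q-1}$, for which $(q-1)x=(q-1)-tj\in\mathbb{Z}$. With this $x$ one has $\frac{(x+h)p^i}{t}=\frac{(1+h)p^i}{t}-\frac{jp^i}{q-1}$, so the left-hand product of (\ref{eq2}) reproduces the right-hand side of (\ref{eq9}); meanwhile $\Gamma_p(\langle xp^i\rangle)=\Gamma_p(\langle-\frac{tjp^i}{q-1}\rangle)$ and the exponent $(1-x)(1-q)=-tj$ gives $\omega(t^{-tj})$. I expect the main, indeed the only, obstacle to be this bookkeeping: confirming that the non-reduced values of $x$ really do shift the index $h$ through a complete residue system modulo $t$, so that the products $\prod_{h=0}^{t-1}$ on the right are reproduced verbatim, and keeping careful track of the power of $\omega$ through the single reduction $\omega(t)^{q-1}=1$. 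Beyond that, each identity is a one-line substitution into (\ref{eq2}).
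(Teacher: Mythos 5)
Your proposal is correct and is essentially the paper's own argument: the paper applies \eqref{eq2} with $m=t$ and $x=\frac{tj}{q-1}-k$, where $k=\lfloor\frac{tj}{q-1}\rfloor$, and then verifies exactly the two facts you isolate (that the integer shift by $k$ merely permutes the product over $h$, and that $\langle(\frac{tj}{q-1}-k)p^i\rangle=\langle\frac{tjp^i}{q-1}\rangle$), together with the same $\omega(t)^{q-1}=1$ bookkeeping. Your repackaging of this as ``\eqref{eq2} is invariant under integer translation of $x$, hence holds for all $x$ with $(q-1)x\in\mathbb{Z}$'' is just a cleaner way of organizing the identical computation, and your explicit choice $x=1-\frac{tj}{q-1}$ for \eqref{eq9} fills in the step the paper leaves to the reader.
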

\begin{proof}
Fix $0\leq j\leq q-2$, and let $k\in\mathbb{Z}_{\geq0}$ be defined such that
\begin{align}\label{eq1}
k\left( \frac{q-1}{t} \right)\leq j < (k+1)\left( \frac{q-1}{t} \right).
\end{align}
Putting $m=t$ and $x=\frac{tj}{q-1}-k$ in \eqref{eq2}, we obtain
\begin{align}\label{eq3}
&\prod_{i=0}^{r-1}\prod_{h=0}^{t-1}\Gamma_p\left( \langle(\frac{j}{q-1}+\frac{h-k}{t})p^i\rangle \right)\notag\\
&~~~~~~~~~~~=\omega\left( t^{(1-\frac{tj}{q-1}+k)(1-q)} \right)
\prod_{i=0}^{r-1}\Gamma_p\left(\langle(\frac{tj}{q-1}-k)p^i\rangle \right)\prod_{h=1}^{t-1}\Gamma_p\left(\langle\frac{hp^i}{t}\rangle \right).
\end{align}
We observe that $0\leq k <t$. 
Therefore, we have
\begin{align}\label{eq4}
&\prod_{i=0}^{r-1}\prod_{h=0}^{t-1}\Gamma_p\left(\langle(\frac{h-k}{t}+\frac{j}{q-1})p^i\rangle\right)\notag\\
&=\prod_{i=0}^{r-1}\prod_{h=0}^{k-1}\Gamma_p\left(\langle(\frac{t+h-k}{t}+\frac{j}{q-1})p^i\rangle\right)
\prod_{h=k}^{t-1}\Gamma_p\left(\langle(\frac{h-k}{t}+\frac{j}{q-1})p^i\rangle\right)\notag\\
&=\prod_{i=0}^{r-1}\prod_{h=t-k}^{t-1}\Gamma_p\left(\langle(\frac{h}{t}+\frac{j}{q-1})p^i\rangle\right)
\prod_{h=0}^{t-k-1}\Gamma_p\left(\langle(\frac{h}{t}+\frac{j}{q-1})p^i\rangle\right)\notag\\
&=\prod_{i=0}^{r-1}\prod_{h=0}^{t-1}\Gamma_p\left(\langle(\frac{h}{t}+\frac{j}{q-1})p^i\rangle\right).
\end{align}
 Again by our choice of $k$, for any nonnegative integer $i$ we have
\begin{align}
\langle\frac{p^itj}{q-1}\rangle=\langle(\frac{tj}{q-1}-k)p^i\rangle.\notag
\end{align}
This gives us
\begin{align}\label{eq6}
\Gamma_p\left(\langle(\frac{p^itj}{q-1})\rangle\right)=\Gamma_p\left(\langle(\frac{tj}{q-1}-k)p^i\rangle\right).
\end{align}
Now substituting \eqref{eq4}, \eqref{eq6} into \eqref{eq3} we obtain \eqref{eq8}.\\\\
We prove \eqref{eq9} following \cite[Lemma 4.1]{mccarthy2} and using similar arguments as given in the proof of \eqref{eq8}.
\end{proof}
\begin{lem}\label{lemma5}
For $1\leq l\leq q-2$ and $0\leq i\leq r-1$, we have
\begin{align}
&\lfloor-\frac{lp^i}{q-1}\rfloor -\lfloor-\frac{2lp^i}{q-1}\rfloor-
\lfloor-\frac{2lp^i}{q-1}\rfloor-\lfloor\frac{3lp^i}{q-1}\rfloor-1\notag\\
&=-2\lfloor\langle \frac{p^i}{2}\rangle- \frac{lp^i}{q-1}\rfloor
-\lfloor\langle- \frac{p^i}{3} \rangle+ \frac{lp^i}{q-1}\rfloor-\lfloor\langle
-\frac{2p^i}{3} \rangle+\frac{lp^i}{q-1}\rfloor.\notag
\end{align}
\end{lem}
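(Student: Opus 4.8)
The plan is to treat the asserted equation as a purely elementary identity among floor functions and fractional parts, in which $p^i$ enters only through its residues modulo $2$ and $3$. Writing $x=\frac{lp^i}{q-1}$, I would first record that $x\notin\mathbb{Z}$: since $\gcd(p,q-1)=1$ we have $\gcd(p^i,q-1)=1$, and $1\le l\le q-2$ forces $q-1\nmid lp^i$. This guarantees the relation $\langle -x\rangle=1-\langle x\rangle$, and it is the only place the hypothesis on $l$ is used. Next I would strip every floor by $\lfloor y\rfloor=y-\langle y\rangle$ on both sides and observe that the terms linear in $x$ cancel identically ($-x+2x+2x-3x=0$ on the left, $2x-x-x=0$ on the right). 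Writing $c_2=\langle\frac{p^i}{2}\rangle$, $c_3=\langle-\frac{p^i}{3}\rangle$, $c_3'=\langle-\frac{2p^i}{3}\rangle$, the claim thereby reduces to
\begin{align}
-\langle -x\rangle+2\langle -2x\rangle+\langle 3x\rangle-1
=-2c_2-c_3-c_3'+2\langle c_2-x\rangle+\langle c_3+x\rangle+\langle c_3'+x\rangle.\notag
\end{align}

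The parameter constants I would evaluate directly. Because $p$ is odd, $p^i$ is odd, so $c_2=\langle\frac{p^i}{2}\rangle=\frac12$; and because $p>3$, a short check in the two cases $p^i\equiv 1,2\pmod 3$ gives $c_3+c_3'=1$, so that $-2c_2-c_3-c_3'=-2$. The one genuinely delicate point is to show that the surviving fractional parts do not depend on $p^i\bmod 3$: using $\langle\langle a\rangle+x\rangle=\langle a+x\rangle$ together with the same two-case analysis, one finds in both residue classes that $\langle c_3+x\rangle+\langle c_3'+x\rangle=\langle x+\frac13\rangle+\langle x+\frac23\rangle$, the two summands merely exchanging roles. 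Thus the right-hand side becomes $-2+2\langle\frac12-x\rangle+\langle x+\frac13\rangle+\langle x+\frac23\rangle$, now free of any dependence on $i$ beyond $x$ itself.

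Finally I would invoke the elementary replication identity $\sum_{h=0}^{m-1}\langle y+\frac{h}{m}\rangle=\langle my\rangle+\frac{m-1}{2}$. Applying it with $m=3$ and $y=x$ rewrites $\langle x+\frac13\rangle+\langle x+\frac23\rangle=\langle 3x\rangle+1-\langle x\rangle$, and applying it with $m=2$ and $y=-x$ rewrites $\langle\frac12-x\rangle=\langle -2x\rangle+\frac12-\langle -x\rangle$. Substituting both, and then using $\langle -x\rangle=1-\langle x\rangle$, collapses each side to $\langle x\rangle+2\langle -2x\rangle+\langle 3x\rangle$, which establishes the identity. I expect the main obstacle to be precisely the residue bookkeeping in the middle step — making the $p^i$-dependence on the right disappear cleanly — rather than the concluding algebra; the replication identity is chosen deliberately so that no separate treatment of the degenerate case $l=\frac{q-1}{2}$ (where $2x\in\mathbb{Z}$) is required.
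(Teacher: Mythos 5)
Your proof is correct, and it takes a genuinely different route from the paper. The paper's own proof is a terse case analysis: it splits according to whether $l<\frac{q-1}{6p^i}$ or not, and in the second case verifies the floor identity by writing $\lfloor \frac{6lp^i}{q-1}\rfloor=6u+v$ and checking the residues $v=0,1,\dots,5$ one by one. You instead strip every floor via $\lfloor y\rfloor=y-\langle y\rangle$, observe that the linear terms cancel, pin down the constants ($\langle \frac{p^i}{2}\rangle=\frac12$ since $p$ is odd, and $\{\langle -\frac{p^i}{3}\rangle,\langle-\frac{2p^i}{3}\rangle\}=\{\frac13,\frac23\}$ as a set since $p>3$, which is exactly why the $p^i$-dependence on the right disappears), and then finish with Hermite's replication identity $\sum_{h=0}^{m-1}\langle y+\frac{h}{m}\rangle=\langle my\rangle+\frac{m-1}{2}$ together with $\langle x\rangle+\langle -x\rangle=1$ for $x=\frac{lp^i}{q-1}\notin\mathbb{Z}$ (which is where $1\le l\le q-2$ and $\gcd(p^i,q-1)=1$ enter, correctly identified as the only use of the hypothesis). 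I verified the reduction: both sides collapse to $\langle x\rangle+2\langle -2x\rangle+\langle 3x\rangle-2$ (you dropped the constant $-2$ in reporting the common value, but since it is the same on both sides this is immaterial). Your approach buys uniformity — the replication identity holds for all real arguments, so the points where $2x$ or $3x$ is an integer need no separate treatment, whereas the paper's case split (which as written does not even obviously cover the boundary value $l=\frac{q-1}{6p^i}$ when that is an integer) leaves that bookkeeping implicit; the cost is that you must invoke Hermite's identity, which the paper's purely hands-on verification avoids.
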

\begin{proof}
We can prove the lemma by considering the following cases:\\\\
Case 1: $1\leq l<\frac{q-1}{6p^i}$.\\
Case 2: $\lfloor\frac{q-1}{6p^i} \rfloor<l\leq q-2$.\\
In case 2, we observe that $\lfloor \frac{6lp^i}{q-1} \rfloor \in \{1, 2, \ldots, (6p^i-1)\}$. Now taking $x\in\mathbb{Z}^{+}$ such that
$1\leq x\leq 6p^i-1$ and $x=6u+v$, where $v=0, 1, 2, 3, 4$ or $5$, the result follows.
\end{proof}
\begin{lem}\label{lemma6}
For $0\leq l\leq q-2$ and $0\leq i\leq r-1$, we have
\begin{align}
&\lfloor\frac{2lp^i}{q-1} \rfloor +2\lfloor\frac{-lp^i}{q-1}\rfloor
-2\lfloor \frac{-2lp^i}{q-1} \rfloor-\lfloor\frac{4lp^i}{q-1} \rfloor\notag\\
&=-2\lfloor\langle \frac{p^i}{2}\rangle  -\frac{lp^i}{q-1}\rfloor-
\lfloor\langle-\frac{p^i}{4} \rangle+\frac{lp^i}{q-1} \rfloor
-\lfloor\langle-\frac{3p^i}{4} \rangle+\frac{lp^i}{q-1} \rfloor.\notag
\end{align}
\end{lem}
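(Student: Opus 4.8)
The plan is to reduce the claimed identity to a single real-variable statement in $s := \frac{lp^i}{q-1}$ and then dispatch it by elementary floor-function manipulations, exactly in the spirit of Lemma~\ref{lemma5}. First I would simplify the three fractional-part constants on the right-hand side. Since $p$ is odd, $p^i$ is odd, so $\langle \frac{p^i}{2}\rangle = \frac12$; and since $\gcd(p,4)=1$ forces $p^i\equiv 1$ or $3\pmod 4$, the unordered pair $\{\langle -\frac{p^i}{4}\rangle, \langle -\frac{3p^i}{4}\rangle\}$ always equals $\{\frac14,\frac34\}$, so their contribution to the sum is independent of which is which. Hence the assertion is equivalent to the identity
\begin{align}
\lfloor 2s\rfloor + 2\lfloor -s\rfloor - 2\lfloor -2s\rfloor - \lfloor 4s\rfloor = -2\lfloor \tfrac12 - s\rfloor - \lfloor \tfrac14 + s\rfloor - \lfloor \tfrac34 + s\rfloor,\notag
\end{align}
to be proved for every real $s$ (the only feature of the actual $s$ used below is that it is real).

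Two routes are available, and I would take the algebraic one. Write $\delta(x)=1$ if $x\notin\mathbb{Z}$ and $\delta(x)=0$ otherwise, and recall $\lfloor -x\rfloor = -\lfloor x\rfloor - \delta(x)$ together with Hermite's identity $\lfloor s\rfloor+\lfloor s+\tfrac12\rfloor=\lfloor 2s\rfloor$ and $\lfloor s\rfloor+\lfloor s+\tfrac14\rfloor+\lfloor s+\tfrac12\rfloor+\lfloor s+\tfrac34\rfloor=\lfloor 4s\rfloor$. Expanding the negative floors on the left turns the left side into $3\lfloor 2s\rfloor - 2\lfloor s\rfloor - \lfloor 4s\rfloor -2\delta(s)+2\delta(2s)$, while Hermite's identity rewrites $\lfloor \tfrac14+s\rfloor+\lfloor \tfrac34+s\rfloor = \lfloor 4s\rfloor-\lfloor 2s\rfloor$ and the relation above gives $-2\lfloor \tfrac12-s\rfloor = 2\lfloor s-\tfrac12\rfloor+2\delta(s-\tfrac12)$ on the right. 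Subtracting, the $\lfloor 4s\rfloor$ terms cancel and the remaining floors collapse via $\lfloor s+\tfrac12\rfloor-\lfloor s-\tfrac12\rfloor=1$, so the entire difference reduces to $2\bigl(1-\delta(s)+\delta(2s)-\delta(s-\tfrac12)\bigr)$.

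It then remains to check the indicator identity $1-\delta(s)+\delta(2s)=\delta(s-\tfrac12)$, which I would verify in the three exhaustive cases $s\in\mathbb{Z}$, $s\in\tfrac12+\mathbb{Z}$, and $2s\notin\mathbb{Z}$; each is immediate. Alternatively, one can mimic Lemma~\ref{lemma5} verbatim: both sides of the displayed identity are periodic in $s$ with period $1$ and change value only at $s\in\frac14\mathbb{Z}$, so setting $x=\lfloor \tfrac{4lp^i}{q-1}\rfloor\in\{0,1,\dots,4p^i-1\}$ and writing $x=4u+v$ with $v\in\{0,1,2,3\}$ reduces everything to four residue checks.

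I expect the genuine subtlety to be the half-integer point $s\equiv\tfrac12\pmod 1$, which really does occur (for instance when $2p^i\mid q-1$, take $l=\frac{q-1}{2p^i}$): there $2s\in\mathbb{Z}$, so the negative-argument floors $\lfloor -s\rfloor,\lfloor -2s\rfloor$ pick up left-limit values while $\lfloor 2s\rfloor,\lfloor 4s\rfloor$ pick up right-limit values, and the naive ``constant on each quarter-interval'' argument fails. The algebraic route makes this harmless by packaging precisely this jump into the single term $\delta(2s)$ in the indicator identity, which is why I prefer it; in the case-analysis route one must instead verify the breakpoints $s=0,\tfrac14,\tfrac12,\tfrac34$ separately, in addition to one interior point of each open quarter-interval.
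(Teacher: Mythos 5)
Your proof is correct, and it takes a genuinely different route from the paper's. The paper disposes of the identity by a direct case analysis on $l$: it splits the range at $\frac{q-1}{4p^i}$, sets $x=\lfloor \frac{4lp^i}{q-1}\rfloor$, writes $x=4u+v$ with $v\in\{0,1,2,3\}$, and checks each residue class, leaving the boundary values of $l$ (where some arguments of the floors are integers or half-integers) implicit; indeed, as literally stated its two cases do not even cover $l=\frac{q-1}{4p^i}$ when that is an integer. You instead reduce everything to a single identity in the real variable $s=\frac{lp^i}{q-1}$, after the correct preliminary observations that $\langle \frac{p^i}{2}\rangle=\frac12$ and that $\{\langle -\frac{p^i}{4}\rangle,\langle -\frac{3p^i}{4}\rangle\}=\{\frac14,\frac34\}$ with the right-hand side symmetric in that pair, and then prove the identity for \emph{every} real $s$ by combining the reflection formula $\lfloor -x\rfloor=-\lfloor x\rfloor-\delta(x)$ with Hermite's identities for $\lfloor 2s\rfloor$ and $\lfloor 4s\rfloor$. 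I have checked the bookkeeping: the difference of the two sides does collapse to $2\bigl(1-\delta(s)+\delta(2s)-\delta(s-\tfrac12)\bigr)$, and the indicator identity $1-\delta(s)+\delta(2s)=\delta(s-\tfrac12)$ holds in each of your three cases, so the argument is complete. What your route buys is that the delicate breakpoints $s\in\tfrac14\mathbb{Z}$, and in particular the half-integer points you single out, are absorbed automatically into the $\delta$-terms rather than requiring separate verification; what the paper's route buys is brevity and uniformity with its treatment of Lemma~\ref{lemma5}, which is organized the same way.
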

\begin{proof}
We can prove the lemma by considering the following cases:\\\\
Case 1: $0\leq l<\frac{q-1}{4p^i}$.\\
Case 2: $\lfloor\frac{q-1}{4p^i}\rfloor<l\leq q-2$.\\
In case 2, we observe that $\lfloor \frac{4lp^i}{q-1}\rfloor\in \{1, 2, \ldots, (4p^i-1)\}$. Now taking $x\in\mathbb{Z}^{+}$ such that
$1\leq x\leq 4p^i-1$ and $x=4u+v$, where $v=0,1,2$ or $3$, the desired result follows.
\end{proof}
Now, we are going to prove Theorem \ref{thm1}. The proof will follow as a consequence of the next theorem. 
We consider an elliptic curve $E_1$ over $\mathbb{F}_q$ in the form
\begin{align}
E_1 : y^2=x^3+cx^2+d,\notag
\end{align}
where $c\neq0$. 
We express the trace of Frobenius endomorphism on the curve $E_1$ as a special value of the function ${_2}G_2[\cdots]$ in the following way.
\begin{thm}\label{thm3}
Let $q=p^r$, $p>3$ be a prime. The trace of Frobenius on $E_1$ is given by
\begin{align}
a_q(E_1)=q \cdot\phi(d)\cdot {_2}G_2\left[ \begin{array}{cc}
                          \frac{1}{2}, & \frac{1}{2} \\
                          \frac{1}{3}, & \frac{2}{3}
                        \end{array}|-\frac{27d}{4c^3}
 \right]_q.\notag
\end{align}
\end{thm}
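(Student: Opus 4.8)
The goal is to count the affine points on $E_1: y^2 = x^3 + cx^2 + d$ over $\mathbb{F}_q$ and repackage the answer into the $_2G_2$ notation. The plan is to start from the standard character-sum formula for the number of points. Using the quadratic character $\phi$, the number of solutions of $y^2 = f(x)$ for a fixed $x$ is $1 + \phi(f(x))$, so
\begin{align}
\#E_1(\mathbb{F}_q) = 1 + \sum_{x \in \mathbb{F}_q}\bigl(1 + \phi(x^3 + cx^2 + d)\bigr) = q + 1 + \sum_{x \in \mathbb{F}_q}\phi(x^3 + cx^2 + d),\notag
\end{align}
the leading $1$ accounting for the point at infinity. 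Hence by \eqref{eq18} one gets $a_q(E_1) = -\sum_{x \in \mathbb{F}_q}\phi(x^3 + cx^2 + d)$, and the whole problem reduces to evaluating this cubic character sum.

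\emph{Converting the character sum into Gauss sums.}
Next I would expand $\phi$ using the fact that $\phi = T^{(q-1)/2}$ for the fixed generator $T$ of $\widehat{\mathbb{F}_q^\times}$, and write the additive structure via Lemma \ref{lemma1}, expressing $\theta$ in terms of the Gauss sums $G_{-m}$. The idea is to introduce an additive character to detect the cubic relation: writing $\phi(x^3+cx^2+d)$ through multiplicative characters and then summing over $x$ using the orthogonality relations of Lemma \ref{lemma2}. A cleaner route is to substitute to normalize the cubic, extract the dependence on $c$ and $d$ (this is where the factor $\phi(d)$ and the argument $-27d/(4c^3)$ will emerge, by completing/scaling the cubic so that its shape matches the universal family), and then apply Lemma \ref{lemma1} together with Lemma \ref{fusi3} to rewrite products and inverses of Gauss sums. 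At this stage the sum becomes a finite combination of Gauss-sum products indexed by $j$, weighted by $\overline{\omega}^{\,j}$ evaluated at the normalized argument.

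\emph{Passing to the $p$-adic gamma function.}
The central step is then to invoke the Gross–Koblitz formula, Theorem \ref{thm4}, to replace every Gauss sum $G(\overline{\omega}^a)$ by the corresponding product of $\Gamma_p$-values and a power of $\pi$. The powers of $\pi$ must cancel so that a genuine $q$-rational quantity remains; verifying this cancellation and matching the accumulated floor-function exponents of $(-p)$ against the exponents in Definition \ref{defin1} is exactly where the combinatorial Lemmas \ref{lemma5} and \ref{lemma6} are designed to be used. Here Lemma \ref{lemma4} with $t=2$ and $t=3$ lets me fold the Davenport–Hasse type products (Theorem \ref{lemma3}) arising from the quadratic and cubic pieces into the single product over $h$ that appears in the definition of $_2G_2$ with lower parameters $\tfrac12,\tfrac12$ and $\tfrac13,\tfrac23$. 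After these substitutions the summand should match term-by-term the definition of $_2G_2\!\left[\begin{smallmatrix}1/2 & 1/2\\ 1/3 & 2/3\end{smallmatrix}\,\big|\,-\tfrac{27d}{4c^3}\right]_q$, up to the overall normalizing factor $q\cdot\phi(d)$.

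\emph{The main obstacle.}
The hardest part will be the bookkeeping of the floor functions and the powers of $\pi$ and $(-p)$: after Gross–Koblitz each Gauss sum contributes a $\pi$-exponent built from fractional parts $\langle ap^i/(q-1)\rangle$, and one must show these combine into the prescribed integer exponents $-\lfloor\langle a_ip^k\rangle - jp^k/(q-1)\rfloor - \lfloor\langle -b_ip^k\rangle + jp^k/(q-1)\rfloor$ of Definition \ref{defin1}. This is precisely the content of Lemma \ref{lemma5} (for the $\tfrac13,\tfrac23$ parameters) and Lemma \ref{lemma6} (for the $\tfrac14,\tfrac34$ parameters used later), so the proof of Theorem \ref{thm3} should lean on Lemma \ref{lemma5}; getting the index shifts and the application of Lemma \ref{lemma4} to align the Teichmüller weights $\omega(t^{\pm tj})$ with the normalized argument is the delicate step that makes everything collapse to the stated closed form.
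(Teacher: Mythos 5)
Your overall strategy coincides with the paper's: reduce the point count to a character sum, convert to Gauss sums, apply Gross--Koblitz, and match the resulting $\Gamma_p$-products and $(-p)$-exponents against Definition \ref{defin1} via Lemmas \ref{lemma4} and \ref{lemma5}. But the proposal has a genuine gap precisely at the computational heart of the argument. Starting from $a_q(E_1)=-\sum_{x}\phi(x^3+cx^2+d)$ and saying one will "substitute to normalize the cubic" and "apply Lemma \ref{lemma1} together with Lemma \ref{fusi3}" does not get you to the specific single sum
\begin{align}
a_{q}(E_1)=-\phi(d)-\frac{\phi(d)}{q-1}\sum_{l=0}^{q-2}\frac{G_{-2l}G_{-2l}G_{3l}}{G_{-l}}\,T^l\!\left(\frac{4d}{c^3}\right),\notag
\end{align}
which is what the rest of the proof needs. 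The paper gets there by a different and more concrete route: it writes $q\cdot(\#E_1(\mathbb{F}_q)-1)=\sum_{x,y,z}\theta(zP(x,y))$, expands each $\theta$ of a monomial by Lemma \ref{lemma1} into a triple sum over characters $T^l,T^m,T^n$, collapses it by orthogonality (forcing $m=-\tfrac{2}{3}n$ and $n=-3l-\tfrac{3(q-1)}{2}$), shifts $l\mapsto l-\tfrac{q-1}{2}$, and then uses Davenport--Hasse with $m=2$ to remove the residual $G_{-l+\frac{q-1}{2}}$; this is also where the factor $T^l(4)$ and the prefactor $\phi(d)$ actually come from. Your plan, as written, would have to reintroduce the additive character anyway to make Lemma \ref{lemma1} applicable, so the $1+\phi$ starting point buys nothing and leaves all of this unaccounted for.

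A second, smaller omission: you do not explain where the minus sign in the argument $-\tfrac{27d}{4c^3}$ comes from. It is not produced by "completing/scaling the cubic"; in the paper it arises from the reflection formula \eqref{eq26}, which converts the product $\prod_i\Gamma_p(\langle(1-\tfrac{l}{q-1})p^i\rangle)\Gamma_p(\langle\tfrac{l}{q-1}p^i\rangle)$ left over after applying Lemma \ref{lemma4} into $(-1)^r\overline{\omega}^l(-1)$, twisting the argument by $-1$ and supplying the factor $(-p)^{-r}$ needed to normalize the prefactor to $q$. Without this step the exponent bookkeeping in Lemma \ref{lemma5} (note the ``$-1$'' on its left-hand side) does not close up, so this is not merely cosmetic. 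The rest of your outline --- Lemma \ref{lemma4} with $t=2,3$ for the Teichm\"{u}ller weights and the lower parameters, Lemma \ref{lemma5} for the floor-function matching, Lemma \ref{lemma6} reserved for the $\tfrac14,\tfrac34$ case --- is correctly targeted.
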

\begin{proof}
We have $\#E_1(\mathbb{F}_q)-1=\#\{(x,y)\in\mathbb{F}_q\times\mathbb{F}_q : y^2=x^3+cx^2+d\}.$\\
Let $P(x,y)=x^3+cx^2+d-y^2$. Now using the identity
\begin{align}
\sum_{z\in\mathbb{F}_q}\theta(zP(x,y))=\left\{
                                         \begin{array}{ll}
                                           q, & \hbox{if $P(x,y)=0$;} \\
                                           0, & \hbox{if $P(x,y)\neq0$,}
                                         \end{array}
                                       \right.
\end{align}
we obtain
\begin{align}\label{eq10}
q.(\#E_1(\mathbb{F}_q)-1)&=\sum_{x,y,z\in\mathbb{F}_q}\theta(zP(x,y))\notag\\
&=q^2+\sum_{z\in\mathbb{F}_{q}^{\times}}\theta(zd)+\sum_{y,z\in\mathbb{F}_{q}^{\times}}\theta(zd)\theta(-zy^2)
+\sum_{x,z\in\mathbb{F}_{q}^{\times}}\theta(zd)\theta(zx^3)\theta(zcx^2)\notag\\
&~~~~~~~~~~~~~~~~~~~~~~~~~~~+\sum_{x,y,z\in\mathbb{F}_{q}^{\times}}\theta(zd)\theta(zx^3)\theta(zcx^2)\theta(-zy^2)\notag\\
&=q^2+A+B+C+D.
\end{align}
From the proof of \cite[Theorem 3.1]{BK2}, we have $A=-1$, $B=1+qT^{\frac{q-1}{2}}(d)$ and $D=-C+D_{\frac{q-1}{2}}$ ,
where
\begin{align}
D_{\frac{q-1}{2}}&=\frac{1}{(q-1)^3}\sum_{l,m,n=0}^{q-2}G_{-l}G_{-m}G_{-n}G_{\frac{q-1}{2}}T^l(d)T^n(c)T^{\frac{q-1}{2}}(-1)\notag\\
&\sum_{x\in\mathbb{F}_{q}^{\times}}T^{3m+2n}(x)\sum_{z\in\mathbb{F}_{q}^{\times}}T^{l+m+n+\frac{q-1}{2}}(z),\notag
\end{align}
which is non zero only if $m=-\frac{2}{3}n$ and $n=-3l-\frac{3(q-1)}{2}$. Since $G_{3l+\frac{3(q-1)}{2}}=G_{3l+\frac{q-1}{2}}$
and $G_{-2l-(q-1)}=G_{-2l}$, we have
\begin{align}\label{eq11}
D_{\frac{q-1}{2}}&=\frac{1}{q-1}\sum_{l=0}^{q-2}G_{-l}G_{-2l}G_{3l+\frac{q-1}{2}}G_{\frac{q-1}{2}}T^l(d)T^{-3l+\frac{q-1}{2}}(c)T^{\frac{q-1}{2}}(-1).
\end{align}
Replacing $l$ by $l-\frac{q-1}{2}$ we obtain
\begin{align}\label{eq25}
D_{\frac{q-1}{2}}&=\frac{1}{q-1}\sum_{l=0}^{q-2}G_{-l+\frac{q-1}{2}}G_{-2l}G_{3l}G_{\frac{q-1}{2}}T^{l-\frac{q-1}{2}}(d)T^{-3l}(c)T^{\frac{q-1}{2}}(-1)\notag\\
&=\frac{\phi(-d)}{q-1}\sum_{l=0}^{q-2}G_{-l+\frac{q-1}{2}}G_{-2l}G_{3l}G_{\frac{q-1}{2}}T^{l}(d)T^{-3l}(c).
\end{align}
Using Davenport-Hasse relation (Theorem \ref{lemma3}) for $m=2$, $\psi=T^{-l}$, we deduce that
\begin{align}\label{eq12}
G_{-l+\frac{q-1}{2}}=\frac{G_{\frac{q-1}{2}}G_{-2l}T^{l}(4)}{G_{-l}}.
\end{align}
Substituting \eqref{eq12} into \eqref{eq25} and using lemma \ref{fusi3} we deduce that
\begin{align}
D_{\frac{q-1}{2}}=\frac{q\phi(d)}{q-1}\sum_{l=0}^{q-2}\frac{G_{-2l}G_{-2l}G_{3l}}{G_{-l}}T^l\left(\frac{4d}{c^3}\right).\notag
\end{align}
Putting the values of $A, B, C$ and $D$ in \eqref{eq10} we obtain
\begin{align}
q\cdot (\#E_{1}(\mathbb{F}_q)-1)=q^2+q\phi(d)+D_{\frac{q-1}{2}},\notag
\end{align}
which yields
\begin{align}\label{eq13}
a_{q}(E_1)=-\phi(d)-\frac{\phi(d)}{q-1}\sum_{l=0}^{q-2}\frac{G_{-2l}G_{-2l}G_{3l}}{G_{-l}}T^l\left(\frac{4d}{c^3}\right).
\end{align}
Now we take $T$ to be the inverse of the Teichm\"{u}ller character, i.e., $T=\overline{\omega}$ 
and use the Gross-Koblitz formula (Theorem \ref{thm4}) to convert the above expression to an expressing involving the $p$-adic gamma function. This gives
\begin{align}
a_q(E_1)&=-\phi(d)-\frac{\phi(d)}{q-1}\sum_{l=0}^{q-2}(-p)^{\sum_{i=0}^{r-1}\{\langle\frac{-2lp^i}{q-1}\rangle+\langle\frac{-2lp^i}{q-1}\rangle+
\langle\frac{3lp^i}{q-1} \rangle -\langle\frac{-lp^i}{q-1}\rangle\}}\notag\\
&\hspace{.5cm}\times\prod_{i=0}^{r-1}\frac{\Gamma_p\left(\langle\frac{-2lp^i}{q-1}\rangle \right) 
\Gamma_p\left(\langle\frac{-2lp^i}{q-1}\rangle \right) 
\Gamma_p\left(\langle\frac{3lp^i}{q-1}\rangle \right)}{\Gamma_p\left(\langle\frac{-lp^i}{q-1}\rangle\right)}\overline{\omega}^l\left(\frac{4d}{c^3}\right).\notag
\end{align}
If we put $s=\displaystyle\sum_{i=0}^{r-1}\left\{\langle\frac{-2lp^i}{q-1}\rangle+\langle\frac{-2lp^i}{q-1}\rangle+
\langle\frac{3lp^i}{q-1}\rangle -\langle\frac{-lp^i}{q-1}\rangle\right\}$, then the above equation becomes
\begin{align}\label{eq14}
a_q(E_1)&=-\phi(d)-\frac{\phi(d)}{q-1}\sum_{l=0}^{q-2}(-p)^s\overline{\omega}^l\left(\frac{4d}{c^3}\right)\notag\\
&\hspace{.5cm}\times\prod_{i=0}^{r-1}\frac{\Gamma_p\left(\langle\frac{-2lp^i}{q-1}\rangle \right) 
\Gamma_p\left(\langle\frac{-2lp^i}{q-1}\rangle \right)\Gamma_p\left(\langle\frac{3lp^i}{q-1}\rangle \right)}{\Gamma_p\left(\langle\frac{-lp^i}{q-1}\rangle \right)}.
\end{align}
Next we use lemma \ref{lemma4} and simplify \eqref{eq14} to obtain
\begin{align}\label{eq15}
a_q(E_1)&=-\phi(d)-\frac{\phi(d)}{q-1}\sum_{l=0}^{q-2}(-p)^s\overline{\omega}^l\left(\frac{27d}{4c^3}\right)\notag\\
&\times\prod_{i=0}^{r-1}\Gamma_p\left(\left\langle\left(1-\frac{l}{q-1}\right)p^i\right\rangle\right)
\Gamma_p\left(\left\langle\left(\frac{l}{q-1}\right) p^i\right\rangle\right)\notag\\
&\times\frac{\Gamma_p\left(\left\langle\left( \frac{1}{2}-\frac{l}{q-1}\right) p^i\right\rangle\right)
\Gamma_p\left(\left\langle\left(\frac{1}{2}-\frac{l}{q-1}\right)p^i \right\rangle\right)}{\Gamma_p\left(\langle\frac{p^i}{2}\rangle\right)
\Gamma_p\left(\langle\frac{p^i}{2}\rangle \right)}\notag\\
&\times\frac{\Gamma_p\left(\left\langle\left(\frac{1}{3}+\frac{l}{q-1}\right)p^i\right\rangle\right)
\Gamma_p\left(\left\langle\left(\frac{2}{3}+\frac{l}{q-1}\right)p^i\right\rangle\right)}{\Gamma_p\left(\langle\frac{p^i}{3}\rangle\right)
\Gamma_p\left(\langle\frac{2p^i}{3}\rangle\right)}.
\end{align}
Calculating $s$ we deduce that 
\begin{align}\label{eq16}
s=\sum_{i=0}^{r-1}\left\{\lfloor\frac{-lp^i}{q-1}\rfloor -\lfloor\frac{-2lp^i}{q-1} \rfloor-
\lfloor\frac{-2lp^i}{q-1}\rfloor-\lfloor\frac{3lp^i}{q-1} \rfloor \right\}.
\end{align}
By \eqref{eq26} we have that, for $0 < l\leq q-2$,
\begin{align}
\prod_{i=0}^{r-1}\Gamma_p\left(\left\langle\left( 1-\frac{l}{q-1}\right) p^i\right\rangle\right)
\Gamma_p\left(\left\langle\left(\frac{l}{q-1} \right) p^i\right\rangle\right)=(-1)^r\overline{\omega}^l(-1).
\end{align}
Therefore,
\begin{align}
a_q(E_1)&=-\frac{q\phi(d)}{q-1}-\frac{q\phi(d)}{q-1}\sum_{l=1}^{q-2}(-p)^{s-r}\overline{\omega}^l\left(-\frac{27d}{4c^3}\right)\notag\\
&\times \prod_{i=0}^{r-1}\frac{\Gamma_p\left(\left\langle\left(\frac{1}{2}-\frac{l}{q-1}\right) p^i\right\rangle\right)
\Gamma_p\left(\left\langle\left(\frac{1}{2}-\frac{l}{q-1}\right) p^i \right\rangle\right)}{\Gamma_p\left(\langle\frac{p^i}{2}\rangle\right)
\Gamma_p\left(\langle\frac{p^i}{2}\rangle\right)}\notag\\
&\times\frac{\Gamma_p\left(\left\langle\left(\frac{1}{3}+\frac{l}{q-1}\right)p^i\right\rangle\right)
\Gamma_p\left(\left\langle\left(\frac{2}{3}+\frac{l}{q-1}\right)p^i\right\rangle\right)}{\Gamma_p\left(\langle\frac{p^i}{3}\rangle\right)
\Gamma_p\left(\langle\frac{2p^i}{3}\rangle\right)}.\notag
\end{align}
Now using the following relation for $0\leq l \leq q-2$
\begin{align}
&\prod_{i=0}^{r-1}\Gamma_p\left(\left\langle\left(\frac{1}{3}+\frac{l}{q-1}\right)p^i\right\rangle \right)
\Gamma_p\left(\left\langle\left(\frac{2}{3}+\frac{l}{q-1}\right)p^i\right\rangle \right)\notag\\
&=\prod_{i=0}^{r-1}\Gamma_p\left(\left\langle\left(-\frac{1}{3}+\frac{l}{q-1}\right)p^i\right\rangle \right)
\Gamma_p\left(\left\langle\left(-\frac{2}{3}+\frac{l}{q-1}\right)p^i\right\rangle \right)\notag\\
\end{align}
and lemma \ref{lemma5}, we deduce that
\begin{align}
a_q(E_1)&=-\frac{q\phi(d)}{q-1}\sum_{l=0}^{q-2}\prod_{i=0}^{r-1}(-p)^{-\lfloor \langle \frac{p^i}{2} \rangle -\frac{lp^i}{q-1} \rfloor
-\lfloor \langle\frac{p^i}{2} \rangle -\frac{lp^i}{q-1} \rfloor -\lfloor \langle -\frac{p^i}{3} \rangle + \frac{lp^i}{q-1} \rfloor
 -\lfloor \langle -\frac{2p^i}{3} \rangle + \frac{lp^i}{q-1} \rfloor}   \notag\\
 &\times\frac{\Gamma_p\left(\left\langle\left( \frac{1}{2}-\frac{l}{q-1}  \right) p^i\right\rangle\right)
\Gamma_p\left(\left\langle\left(\frac{1}{2}-\frac{l}{q-1}   \right) p^i \right\rangle\right)  }{\Gamma_p\left(\langle\frac{p^i}{2}\rangle  \right)
\Gamma_p\left(\langle\frac{p^i}{2}\rangle\right)}\notag\\
&\times\frac{\Gamma_p\left(\left\langle\left(-\frac{1}{3}+\frac{l}{q-1}\right)p^i\right\rangle\right)
\Gamma_p\left(\left\langle\left(-\frac{2}{3}+\frac{l}{q-1}\right)p^i\right\rangle\right)}{\Gamma_p\left(\langle-\frac{p^i}{3}\rangle\right)
\Gamma_p\left(\langle-\frac{2p^i}{3}\rangle\right)}\overline{\omega}^l\left(-\frac{27d}{4c^3}  \right)\notag\\
&=q\cdot \phi(d)\cdot {_2}G_2\left[ \begin{array}{cc}
                          \frac{1}{2}, & \frac{1}{2} \\
                          \frac{1}{3}, & \frac{2}{3}
                        \end{array}|-\frac{27d}{4c^3}
 \right]_q.\notag
\end{align}
This completes the proof of the theorem.
\end{proof}
\textbf{Proof of Theorem \ref{thm1}:} We have $j(E_{a, b})\neq 0$. Hence $a\neq 0$. Since $(-a/3)$ is a quadratic residue in $\mathbb{F}_q$, 
we find $k\in \mathbb{F}_{q}^{\times}$ such that
$3k^2+a=0$. A change of variables $(x,y)\mapsto (x+k,h)$ takes the elliptic curve $E_{a, b}:y^2=x^3+ax+b$ to
\begin{align}\label{eq23}
E^{\prime}:y^2=x^3+3kx^2+(k^3+ak+b).
\end{align}
Clearly $a_{q}(E_{a,b})=a_{q}(E^{\prime})$. Using Theorem \ref{thm3} for the elliptic curve $E'$, we complete the proof.\\
Now, we are going to prove Theorem \ref{thm2}. The proof will follow as a consequence of the next theorem. 
We consider an elliptic curve $E_2$ over $\mathbb{F}_q$ in the form
\begin{align}
E_2 : y^2=x^3+fx^2+gx\notag,
\end{align}
where $f\neq0$. We express the trace of Frobenius endomorphism on the curve $E_2$ as a special value of the function ${_2}G_2[\cdots]$ in the following way.
\begin{thm}\label{thm5}
Let $q=p^r$, $p>3$ be a prime. The trace of Frobenius on $E_2$ is given by
\begin{align}
a_q(E_2)=q\cdot\phi(-g)\cdot {_2}G_2\left[ \begin{array}{cc}
                          \frac{1}{2}, & \frac{1}{2} \\
                          \frac{1}{4}, & \frac{3}{4}
                        \end{array}|\frac{4g}{f^2}
 \right]_q.\notag
\end{align}
\end{thm}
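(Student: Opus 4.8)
The plan is to prove Theorem~\ref{thm5} by following the proof of Theorem~\ref{thm3} almost verbatim, with the cubic's constant term replaced by the linear term $gx$ throughout. First I would write $\#E_2(\mathbb{F}_q)-1=\#\{(x,y)\in\mathbb{F}_q\times\mathbb{F}_q: y^2=x^3+fx^2+gx\}$ and, with $P(x,y)=x^3+fx^2+gx-y^2$, use the identity that $\sum_{z\in\mathbb{F}_q}\theta(zP(x,y))$ equals $q$ when $P(x,y)=0$ and $0$ otherwise to obtain $q\cdot(\#E_2(\mathbb{F}_q)-1)=\sum_{x,y,z}\theta(zP(x,y))$. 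Splitting this triple sum according to which of $x,y,z$ vanish yields the analogues of the terms $A,B,C,D$ of Theorem~\ref{thm3}. The one structural change is that $P$ now has no constant term, so the terms coming from $x=0$ carry no multiplicative character and combine to a pure constant; in particular, and in contrast to the proof of Theorem~\ref{thm3}, no $q\phi(\cdot)$ term appears at this stage, so the entire quadratic-character prefactor of the final formula must be produced by the main term. After the $e=0$ part of $D$ cancels against $C$, that main term is the $e=\frac{q-1}{2}$ piece $D_{\frac{q-1}{2}}$.

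Next I would expand each $\theta$ by Lemma~\ref{lemma1} and evaluate the sums over $x,y,z\in\mathbb{F}_q^{\times}$ using Lemma~\ref{lemma2}. Because all three monomials $x^3,fx^2,gx$ now contribute to the $x$-sum, the orthogonality constraints read $3a+2b+c\equiv 0$, $2e\equiv 0$, and $a+b+c+e\equiv 0\pmod{q-1}$; taking $e=\frac{q-1}{2}$ these solve to a single free index, which is the combinatorial point that differs from $E_1$ (there the constant term decoupled from the $x$-sum). Reducing $D_{\frac{q-1}{2}}$ to this one index, I would then apply the Davenport--Hasse relation (Theorem~\ref{lemma3}) with $m=2$ to the Gauss sum arising from $y^2$, and clear the resulting inverse Gauss sums with Lemma~\ref{fusi3} (using $G_{\frac{q-1}{2}}^2=q\phi(-1)$). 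This lands one at a single sum of the shape $\frac{q\,\phi(\cdot)}{q-1}\sum_{a}\frac{G_{\bullet}G_{\bullet}G_{\bullet}}{G_{\bullet}}\,T^{a}(\cdot)$ whose argument is a scalar multiple of $g/f^2$.

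I would then put $T=\overline{\omega}$ and pass to the $p$-adic gamma function via the Gross--Koblitz formula (Theorem~\ref{thm4}). The essential new feature is that the entries $\tfrac14,\tfrac34$ cannot be produced by a Davenport--Hasse relation of modulus $4$, since that would force $4\mid q-1$; instead I would invoke Lemma~\ref{lemma4} with $t=2$ and with $t=4$ to rewrite the scaled gamma products $\prod_i\Gamma_p(\langle 2ap^i/(q-1)\rangle)$ and $\prod_i\Gamma_p(\langle 4ap^i/(q-1)\rangle)$ over full residue systems. This is exactly what introduces the $\tfrac14,\tfrac34$ parameters and, crucially, is valid for every $q$. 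Finally I would apply Lemma~\ref{lemma6}, the $\tfrac14,\tfrac34$ analogue of the floor-function identity Lemma~\ref{lemma5} used for Theorem~\ref{thm3}, to match the accumulated exponents of $-p$ with those in Definition~\ref{defin1}, and read off $q\cdot\phi(-g)\cdot{}_2G_2[\tfrac12,\tfrac12;\tfrac14,\tfrac34\,|\,4g/f^2]_q$.

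The step I expect to be the main obstacle is the exact bookkeeping of the scalar argument and of the quadratic-character prefactor. The $\overline{\omega}$-factors supplied by Lemma~\ref{lemma4} (with $t=2$ and $t=4$) multiply the $T^{a}$-argument, and it is these that turn the crude scalar coming out of Davenport--Hasse into precisely $4g/f^2$; getting this constant, and the accompanying sign, exactly right is the fiddly part. Subtler still is pinning down $\phi(-g)$: since $g$ enters only through an index-dependent character $T^{a}(g)$, its quadratic character can be transferred into the constant prefactor only after a substitution $a\mapsto a-\frac{q-1}{2}$, the analogue of the shift ``replacing $l$ by $l-\frac{q-1}{2}$'' in the proof of Theorem~\ref{thm3}, and tracking how this shift interacts with the Davenport--Hasse factors and the $G_{\frac{q-1}{2}}$'s is where the care lies. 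Once Theorem~\ref{thm5} is established, Theorem~\ref{thm2} follows at once from the change of variables $x\mapsto x+h$, which carries $E_{a,b}$ to $E_2$ with $f=3h$ and $g=3h^2+a$.
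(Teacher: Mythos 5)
Your plan follows the paper's proof of Theorem \ref{thm5} essentially step for step: the additive-character point count, cancellation of the $x=0$ contributions, reduction of the main term to a single-index sum via orthogonality, the shift $l\mapsto l-\frac{q-1}{2}$ to extract $\phi(-g)$, two applications of Davenport--Hasse with $m=2$, Gross--Koblitz, Lemma \ref{lemma4} with $t=2,4$, and Lemma \ref{lemma6}. The approach is correct and matches the paper's.
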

\begin{proof}
We recall that $\#E_2(\mathbb{F}_q)-1=\#\{(x,y)\in\mathbb{F}_q\times\mathbb{F}_q : y^2=x^3+fx^2+gx\}.$\\
Let $P(x,y)=x^3+fx^2+gx-y^2$. Now using the identity
\begin{align}
\sum_{z\in\mathbb{F}_q}\theta(zP(x,y))=\left\{
                                         \begin{array}{ll}
                                           q, & \hbox{if $P(x,y)=0$;} \\
                                           0, & \hbox{if $P(x,y)\neq0$,}
                                         \end{array}
                                       \right.
\end{align}
we obtain
\begin{align}\label{eq17}
q\cdot (\#E_2(\mathbb{F}_q)-1)&=\sum_{x,y,z\in\mathbb{F}_q}\theta(zP(x,y))\notag\\
&=q^2+\sum_{z\in\mathbb{F}_{q}^{\times}}\theta(0)+\sum_{y,z\in\mathbb{F}_{q}^{\times}}\theta(-zy^2)+\sum_{x,z\in\mathbb{F}_{q}^{\times}}\theta(zx^3)\theta(zfx^2)
\theta(zgx)\notag\\
&+\sum_{x,y,z\in\mathbb{F}_{q}^{\times}}\theta(zx^3)\theta(zfx^2)\theta(zgx)\theta(-zy^2)\notag\\
&=q^2+(q-1)+A+B+C.
\end{align}
From \cite[Theorem 3.2]{BK2} we have $A=-(q-1)$ and $C=-B+C_{\frac{q-1}{2}}$, where
\begin{align}
C_{\frac{q-1}{2}}&=\frac{G_{\frac{q-1}{2}}}{(q-1)^3}\sum_{l,m,n=0}^{q-2}G_{-l}G_{-m}G_{-n}T^m(f)T^n(g)T^{\frac{q-1}{2}}(-1)\notag\\
&\times \sum_{x\in\mathbb{F}_{q}^{\times}}T^{3l+2m+n}(x)\sum_{z\in\mathbb{F}_{q}^{\times}}T^{l+m+n+\frac{q-1}{2}}(z),\notag
\end{align}
which is non zero only if $n=l$ and $m=-2l+\frac{q-1}{2}$. This gives
\begin{align}
C_{\frac{q-1}{2}}=\frac{G_{\frac{q-1}{2}}\phi(-1)}{q-1}\sum_{l=0}^{q-2}G_{-l}G_{2l+\frac{q-1}{2}}G_{-l}T^l\left(\frac{g}{f^2}\right).\notag
\end{align}
Substituting the values of $A$, $B$ and $C$ in \eqref{eq17} we obtain
\begin{align}\label{eq19}
q\cdot (\#E_2(\mathbb{F}_q)-1)=q^2+\frac{G_{\frac{q-1}{2}}\phi(-1)}{q-1}\sum_{l=0}^{q-2}G_{-l}G_{2l+\frac{q-1}{2}}G_{-l}T^l\left(\frac{g}{f^2}\right).
\end{align}
Replacing $l$ by $l-\frac{q-1}{2}$ we deduce that
\begin{align}\label{eq27}
q\cdot (\#E_2(\mathbb{F}_q)-1)&=q^2+\frac{G_{\frac{q-1}{2}}\phi(-1)}{q-1}\sum_{l=0}^{q-2}G_{-l+\frac{q-1}{2}}G_{2l+\frac{q-1}{2}}G_{-l+\frac{q-1}{2}}
T^{l-\frac{q-1}{2}}\left(\frac{g}{f^2}\right)\notag\\
&=q^2+\frac{G_{\frac{q-1}{2}}\phi(-g)}{q-1}\sum_{l=0}^{q-2}G_{-l+\frac{q-1}{2}}G_{2l+\frac{q-1}{2}}G_{-l+\frac{q-1}{2}}
T^{l}\left(\frac{g}{f^2}\right).
\end{align}
Using Davenport-Hasse relation \ref{lemma3} for $m=2$, $\psi=T^{-l}$ and $\psi=T^{2l}$ successively, we have
\begin{align}
&G_{-l+\frac{q-1}{2}}=\frac{G_{\frac{q-1}{2}}G_{-2l}T^l(4)}{G_{-l}}\notag
\end{align}
and
\begin{align}
&G_{2l+\frac{q-1}{2}}=\frac{G_{4l}G_{\frac{q-1}{2}}T^{-l}(16)}{G_{2l}}.\notag
\end{align}
Putting these values in \eqref{eq27} and using lemma \ref{fusi3}, we obtain
\begin{align}
q\cdot (\#E_2(\mathbb{F}_q)-1)=q^2+\frac{q^2\phi(-g)}{q-1}\sum_{l=0}^{q-2}\frac{G_{-2l}G_{-2l}G_{4l}}{G_{-l}G_{-l}G_{2l}}T^l\left(\frac{g}{f^2}\right).\notag
\end{align}
We now put $T=\overline{\omega}$. Then \eqref{eq18} and Gross-Koblitz formula (Theorem \ref{thm4}) yield
\begin{align}\label{eq21}
a_q(E_2)&=-\frac{q\phi(-g)}{q-1}\sum_{l=0}^{q-2}(-p)^{\sum_{i=0}^{r-1}\{2\langle-\frac{2lp^i}{q-1}\rangle+\langle\frac{4lp^i}{q-1}\rangle
-2\langle-\frac{lp^i}{q-1} \rangle -\langle\frac{2lp^i}{q-1}\rangle\}}\notag\\
&\times \prod_{i=0}^{r-1}\frac{\Gamma_p\left(\langle\frac{-2lp^i}{q-1}\rangle \right) 
\Gamma_p\left(\langle\frac{-2lp^i}{q-1}\rangle \right) 
\Gamma_p\left(\langle\frac{4lp^i}{q-1}\rangle \right)}{\Gamma_p\left(\langle\frac{-lp^i}{q-1}\rangle \right)
\Gamma_p\left(\langle\frac{-lp^i}{q-1}\rangle \right) 
\Gamma_p\left(\langle\frac{2lp^i}{q-1}\rangle \right)}\overline{\omega}^l\left(\frac{g}{f^2}\right)\notag\\
&=-\frac{q\phi(-g)}{q-1}\sum_{l=0}^{q-2}(-p)^s\overline{\omega}^l\left(\frac{g}{f^2}\right)\notag\\
&\times\prod_{i=0}^{r-1}\frac{\Gamma_p\left(\langle\frac{-2lp^i}{q-1}\rangle \right) 
\Gamma_p\left(\langle\frac{-2lp^i}{q-1}\rangle \right) 
\Gamma_p\left(\langle\frac{4lp^i}{q-1} \rangle \right)}{\Gamma_p\left(\langle\frac{-lp^i}{q-1}\rangle \right)
\Gamma_p\left(\langle\frac{-lp^i}{q-1}\rangle \right) \Gamma_p\left(\langle\frac{2lp^i}{q-1}\rangle \right)},
\end{align}
where $s=\displaystyle\sum_{i=0}^{r-1}\left\{2\langle\frac{-2lp^i}{q-1}\rangle+\langle\frac{4lp^i}{q-1}\rangle-
2\langle\frac{-lp^i}{q-1}\rangle -\langle\frac{2lp^i}{q-1}\rangle\right\}$.
Next we use lemma \ref{lemma4} and after simplification we obtain
\begin{align}\label{eq22}
a_q(E_2)&=-\frac{q\phi(-g)}{q-1}\sum_{l=0}^{q-2}(-p)^s\overline{\omega}^l\left(\frac{4g}{f^2}\right)\notag\\
&\times \prod_{i=0}^{r-1}\frac{\Gamma_p\left(\left\langle\left(\frac{1}{2}- \frac{l}{q-1}  \right)p^i  \right\rangle \right)
\Gamma_p\left(\left\langle\left(\frac{1}{2}- \frac{l}{q-1}  \right)p^i  \right\rangle \right)}{\Gamma_p\left(\langle \frac{p^i}{2}\rangle \right)
\Gamma_p\left(\langle \frac{p^i}{2}\rangle \right)}\notag\\
&\frac{\Gamma_p\left(\left\langle\left(\frac{1}{4} +\frac{l}{q-1}\right)p^i\right\rangle \right)
\Gamma_p\left(\left\langle\left(\frac{3}{4} +\frac{l}{q-1}  \right)p^i  \right\rangle \right)}
{\Gamma_p\left(\langle \frac{p^i}{4}\rangle \right)\Gamma_p\left(\langle \frac{3p^i}{4}\rangle \right)}.
\end{align}
We now simplify the expression for $s$ and find that
\begin{align}\label{eq22}
s=\sum_{i=0}^{r-1}\left\{\lfloor\frac{2lp^i}{q-1}\rfloor +2\lfloor\frac{-lp^i}{q-1} \rfloor
-2\lfloor\frac{-2lp^i}{q-1} \rfloor-\lfloor\frac{4lp^i}{q-1}\rfloor \right\}.
\end{align}
The following relation for $0\leq l\leq q-2$
\begin{align}
&\prod_{i=0}^{r-1}\Gamma_p\left(\left\langle\left(\frac{1}{4} +\frac{l}{q-1}  \right)p^i  \right\rangle \right)\Gamma_p\left(\left\langle\left(\frac{3}{4} +\frac{l}{q-1}  \right)p^i  \right\rangle \right)\notag\\
&=\prod_{i=0}^{r-1}\Gamma_p\left(\left\langle\left(-\frac{1}{4} +\frac{l}{q-1}  \right)p^i  \right\rangle \right)
\Gamma_p\left(\left\langle\left(-\frac{3}{4} +\frac{l}{q-1}  \right)p^i  \right\rangle \right)\notag
\end{align}
and lemma \ref{lemma6} yield
\begin{align}
a_q(E_2)&=-\frac{q\phi(-g)}{q-1}\sum_{l=0}^{q-2}\prod_{i=0}^{r-1}(-p)^{-\lfloor \langle \frac{p^i}{2} \rangle -\frac{lp^i}{q-1} \rfloor
-\lfloor \langle\frac{p^i}{2} \rangle -\frac{lp^i}{q-1} \rfloor -\lfloor \langle -\frac{p^i}{4} \rangle + \frac{lp^i}{q-1} \rfloor
 -\lfloor \langle -\frac{3p^i}{4} \rangle + \frac{lp^i}{q-1} \rfloor}\notag\\
 &\times\frac{\Gamma_p\left(\left\langle\left(\frac{1}{2}- \frac{l}{q-1}  \right)p^i  \right\rangle \right)
\Gamma_p\left(\left\langle\left(\frac{1}{2}- \frac{l}{q-1}  \right)p^i  \right\rangle \right)}{\Gamma_p\left(\langle \frac{p^i}{2}\rangle \right)
\Gamma_p\left(\langle \frac{p^i}{2}\rangle \right)}\notag\\
&\times\frac{\Gamma_p\left(\left\langle\left(-\frac{1}{4} +\frac{l}{q-1}  \right)p^i  \right\rangle \right)
\Gamma_p\left(\left\langle\left(-\frac{3}{4} +\frac{l}{q-1}  \right)p^i  \right\rangle \right)}
{\Gamma_p\left(\langle -\frac{p^i}{4}\rangle \right)\Gamma_p\left(\langle -\frac{3p^i}{4}\rangle \right)}\overline{\omega}^l\left(\frac{4g}{f^2}\right)\notag\\
&=q\cdot \phi(-g)\cdot {_2}G_2\left[ \begin{array}{cc}
                          \frac{1}{2}, & \frac{1}{2} \\
                          \frac{1}{4}, & \frac{3}{4}
                        \end{array}|\frac{4g}{f^2}
 \right]_q.\notag
\end{align}
This completes the proof of the theorem.
\end{proof}
\textbf{Proof of Theorem \ref{thm2}:} Here $j(E_{a,b})\neq 1728$ and hence $b\neq 0$. 
Let $h\in\mathbb{F}_{q}^{\times}$ be such that $h^3+ah+b=0$. A change of variables $(x,y)\mapsto(x+h,y)$ takes the elliptic curve $E_{a,b}:y^2=x^3+ax+b$ to
\begin{align}\label{eq24}
E^{\prime\prime}: y^2=x^3+3hx^2+(3h^2+a)x.
\end{align}
Clearly $a_q(E_{a, b})=a_q(E^{\prime\prime})$ and $3h\neq0$. Using Theorem \ref{thm5} for the elliptic curve $E''$, we complete the proof.

\section*{Acknowledgment}
We thank Dipendra Prasad and Ken Ono for careful reading of a draft of the manuscript.

\end{document}